%
%
%

\documentclass[12pt]{amsart}
\usepackage{latexsym,amssymb,amsmath}

\textwidth=16.00cm
\textheight=22.00cm
\topmargin=0.00cm
\oddsidemargin=0.00cm
\evensidemargin=0.00cm
\headheight=0cm
\headsep=1cm
\headsep=0.5cm 
\numberwithin{equation}{section}
\hyphenation{semi-stable}
\setlength{\parskip}{3pt}

\usepackage{hyperref}
\usepackage{caption}
\usepackage{subcaption}
\usepackage{tikz}
\usepackage{array}
\usepackage{float}

\usetikzlibrary{positioning}
\usetikzlibrary{arrows}
\newtheorem{theorem}{Theorem}[section]
\newtheorem{lemma}[theorem]{Lemma}

\newtheorem{corollary}[theorem]{Corollary}

\theoremstyle{definition}
\newtheorem{definition}[theorem]{Definition} 
\newtheorem{remark}[theorem]{Remark}
\newtheorem{example}[theorem]{Example}

\newtheorem{discussion}[theorem]{Discussion}

\begin{document}


\newcommand{\del}{\operatorname{del}}
\newcommand{\lk}{\operatorname{lk}}
\newcommand{\reg}{\operatorname{reg}}
\newcommand{\pd}{\operatorname{pd}}

 
\title{Newton complementary duals of $f$-ideals}
\thanks{Version: \today}

\author{Samuel Budd}
\address{Department of Mathematics and Statistics\\
McMaster University, Hamilton, ON, L8S 4L8}
\curraddr{2061 Oliver Road, Thunder Bay, ON, P7G 1P7} 
\email{budds2@mcmaster.ca,sjbudd3@gmail.com  }

\author{Adam Van Tuyl}
\address{Department of Mathematics and Statistics\\
McMaster University, Hamilton, ON, L8S 4L8}
\email{vantuyl@math.mcmaster.ca}
 
\keywords{$f$-ideals, facet ideal, Stanley-Reisner correspondence,
$f$-vectors, Newton complementary dual, Kruskal-Katona}
\subjclass[2000]{05E45,05E40,13F55}

\begin{abstract}
A square-free monomial ideal $I$ of $k[x_1,\ldots,x_n]$ is
said to be an $f$-ideal if the facet complex and non-face complex
associated with $I$ have the same $f$-vector.    We show that 
$I$ is an $f$-ideal if and only if its Newton complementary dual
$\widehat{I}$ is also an $f$-ideal.  Because of this duality,
previous results about some classes of $f$-ideals can be extended
to a much larger class of $f$-ideals.   An interesting by-product of our
work is an alternative formulation of the Kruskal-Katona theorem
for $f$-vectors of simplicial complexes.
\end{abstract}
 
\maketitle


\section{Introduction} 

Let $I$ be a square-free monomial ideal of $R = k[x_1,\ldots,x_n]$ where
$k$ is a field.   Associated with any such ideal are two simplicial complexes.
The {\it non-face complex}, denoted $\delta_\mathcal{N}(I)$, (also called
the {\it Stanley-Reisner complex}) is the simplicial complex whose faces are
in one-to-one correspondence with the square-free monomials not in $I$.   
Faridi \cite{faridi} introduced a second complex, the {\it facet complex }
$\delta_{\mathcal{F}}(I)$, where the generators of $I$ define the 
facets of the simplicial complex (see the next section for complete definitions).
In general, the two simplicial complexes $\delta_\mathcal{N}(I)$ and $\delta_\mathcal{F}(I)$
can be very different.  For example, the two complexes may have different 
dimensions;  as a consequence, the {\it $f$-vectors} 
of $\delta_\mathcal{F}(I)$ and $\delta_\mathcal{N}(I)$,
which enumerate  all the faces of a given dimension,
may be quite different.

If $I$ is a square-free monomial ideal with the property
that the $f$-vectors of 
$\delta_\mathcal{F}(I)$ and $\delta_\mathcal{N}(I)$ are the same, then 
$I$ is called an {\it $f$-ideal}.   The notion of an $f$-ideal was first 
introduced by Abbasi, Ahmad, Anwar, and Baig 
\cite{abbasi}.   It is natural to ask if it is possible to
classify all the square-free monomial ideals that are $f$-ideals.  
Abbasi, et.\ al  classified all the $f$-ideals generated in degree two.   This result was later generalized by Anwar, Mahmood, Binyamin, and Zafar \cite{anwar} which classified all the $f$-ideals $I$ that are unmixed and generated in
degree $d \geq 2$.  An alternative proof for this result was found by Guo and 
Wu \cite{guo2}.  Gu, Wu, and Liu \cite{guo} later
removed the unmixed restriction
of \cite{anwar}. Other work related to $f$-ideals includes the papers
\cite{mahmood,mahmood2}.

The purpose of this note is to show that the property of being 
an $f$-ideal is preserved after taking the Newton complementary dual of
$I$.  The notion of a Newton complementary dual was first introduced in a more
general context by 
Costa and Simis \cite{costa} in their study of Cremona maps;  additional
properties were developed by D\'oria and Simis \cite{doria}.   Ansaldi, Lin, and
Shin \cite{ansladi} later investigated the Newton complementary duals of monomial ideals.
Using the definition of \cite{ansladi}, the {\it Newton complementary dual} of 
a square-free monomial ideal $I$ is
\[\widehat{I} = \left.\left\langle \frac{x_1\cdots x_n}{m} ~\right| 
m \in \mathcal{G}(I)
\right\rangle\]
where $\mathcal{G}(I)$ denotes the minimal generators of $I$.  With this notation, we 
prove:

\begin{theorem}[Theorem \ref{maintheorem2}] \label{maintheorem} 
Let $I \subseteq R$ be a square-free monomial ideal.
Then $I$ is an $f$-ideal if and only if $\widehat{I}$ is an $f$-ideal.
\end{theorem}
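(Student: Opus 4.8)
The plan is to realize the two simplicial complexes attached to $\widehat{I}$ as Alexander duals of the two complexes attached to $I$, \emph{with the roles of the facet complex and the non-face complex interchanged}, and then to quote the effect of Alexander duality on $f$-vectors. First I fix conventions: regard every complex as a subcomplex of the full simplex $2^{[n]}$ on vertex set $[n]$, identify a square-free monomial with its support $S\subseteq[n]$, and write $S^{c}=[n]\setminus S$. If $\mathcal{S}=\{S_1,\dots,S_r\}$ are the supports of $\mathcal{G}(I)$, then since complementation reverses divisibility of square-free monomials, $\mathcal{S}^{c}=\{S_1^{c},\dots,S_r^{c}\}$ is again an antichain with no redundancy, so $\mathcal{G}(\widehat{I})$ has exactly these as supports. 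Hence $\delta_{\mathcal{F}}(\widehat{I})$ is the complex generated by $S_1^{c},\dots,S_r^{c}$, while $\delta_{\mathcal{N}}(\widehat{I})$ is the complex whose minimal non-faces are $S_1^{c},\dots,S_r^{c}$.

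The first key step is the pair of identifications
\[ \delta_{\mathcal{F}}(\widehat{I}) = \delta_{\mathcal{N}}(I)^{\vee} \qquad\text{and}\qquad \delta_{\mathcal{N}}(\widehat{I}) = \delta_{\mathcal{F}}(I)^{\vee}, \]
where $\Delta^{\vee}=\{T\subseteq[n] : T^{c}\notin\Delta\}$. Both follow by unwinding the definitions: $T\in\delta_{\mathcal{F}}(\widehat{I})$ means $T\subseteq S_i^{c}$ for some $i$, i.e.\ $S_i\subseteq T^{c}$ for some $i$, i.e.\ $T^{c}\notin\delta_{\mathcal{N}}(I)$; and $T\in\delta_{\mathcal{N}}(\widehat{I})$ means $S_i^{c}\not\subseteq T$ for all $i$, i.e.\ $T^{c}\not\subseteq S_i$ for all $i$, i.e.\ $T^{c}\notin\delta_{\mathcal{F}}(I)$.

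The second key step is the standard formula
\[ f_i(\Delta^{\vee}) = \binom{n}{i+1} - f_{n-i-2}(\Delta), \]
valid for every subcomplex $\Delta$ of $2^{[n]}$ and every integer $i$, because a set of size $i+1$ lies in $\Delta^{\vee}$ precisely when its complement (of size $n-i-1$) is not a face of $\Delta$. Combining the two steps, $\widehat{I}$ is an $f$-ideal $\iff$ $f_i(\delta_{\mathcal{F}}(\widehat{I}))=f_i(\delta_{\mathcal{N}}(\widehat{I}))$ for all $i$ $\iff$ $\binom{n}{i+1}-f_{n-i-2}(\delta_{\mathcal{N}}(I))=\binom{n}{i+1}-f_{n-i-2}(\delta_{\mathcal{F}}(I))$ for all $i$ $\iff$ $f_j(\delta_{\mathcal{N}}(I))=f_j(\delta_{\mathcal{F}}(I))$ for all $j$ (reindexing $j=n-i-2$) $\iff$ $I$ is an $f$-ideal. (The construction is visibly involutive, $\widehat{\widehat{I}}=I$, so the equivalence is symmetric, as it must be.)

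The main obstacle is not the final arithmetic, which is bookkeeping, but pinning down the first step with full care: one must insist that $\delta_{\mathcal{F}}(I)$ and $\delta_{\mathcal{N}}(I)$ are recorded on the common ambient vertex set $[n]$, so that variables dividing no generator (``ghost'' vertices) are handled consistently on both sides of each identification, and one must check that complementation really does send $\mathcal{G}(I)$ bijectively onto $\mathcal{G}(\widehat{I})$ with no cancellation and no new redundancy. Once the dictionary ``$\delta_{\mathcal{F}}$ of the dual $=$ Alexander dual of $\delta_{\mathcal{N}}$'' (and vice versa) is established, the theorem is immediate; the same dictionary, read at the level of $f$-vectors via $f_i\mapsto\binom{n}{i+1}-f_{n-i-2}$, is what underlies the alternative Kruskal--Katona formulation advertised in the abstract.
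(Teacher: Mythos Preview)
Your proof is correct and follows essentially the same route as the paper. The paper establishes the identical $f$-vector relation $f_i(\delta_{\mathcal{N}}(\widehat{I}))=\binom{n}{i+1}-f_{n-i-2}(\delta_{\mathcal{F}}(I))$ (and its companion with $\mathcal{F}$ and $\mathcal{N}$ swapped) via an explicit monomial bijection $m\mapsto x_1\cdots x_n/m$ in its Lemma~3.3 and Corollary~3.5, and then the theorem is immediate; the paper only records the Alexander-dual identifications $\delta_{\mathcal{N}}(\widehat{I})=\delta_{\mathcal{F}}(I)^{\vee}$ afterward in a separate Discussion, whereas you lead with them, but the content is the same complementation map and the same $\binom{n}{i+1}-f_{n-i-2}$ formula.
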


\noindent
Our proof involves relating the $f$-vectors of the four simplicial
complexes $\delta_\mathcal{F}(I)$,
$\delta_\mathcal{N}(I)$,  $\delta_\mathcal{F}(\widehat{I})$, and 
$\delta_\mathcal{N}(\widehat{I})$.
An interesting by-product of this discussion is to give a reformulation
of the celebrated Kruskal-Katona theorem (see \cite{ka,kr}) 
which classifies what vectors can
be the $f$-vector of a simplicial complex (see Theorem \ref{kk}).

A consequence  of Theorem \ref{maintheorem} is that $f$-ideals
come in ``pairs".  
Note that when $I$ is an $f$-ideal generated in degree $d$, then 
$\widehat{I}$ gives us an $f$-ideal generated in degree $n-d$.
We can use the classification of \cite{abbasi} of
$f$-ideals generated in degree  two to also give us a classification
of $f$-ideals generated in degree $n-2$.  
This corollary and others are given as applications
of Theorem \ref{maintheorem}.

Our paper uses the following outline.  In Section 2 we provide all the necessary
background results.   In Section 3, we introduce the Newton complementary
dual of a square-free monomial ideal, and we study how the
$f$-vector behaves under this duality.   In
Section 4 we prove  Theorem \ref{maintheorem}  and devote the rest of
the section to applications.

\noindent
{\bf Acknowledgments.} 
Parts of this paper appeared in the first author's MSc project \cite{B}.
The second author's research was
supported in part by NSERC Discovery Grant 2014-03898.
We thank Hasan Mahmood for his feedback and comments.  
{\tt Macaulay2} \cite{mac2}
was used in the initial stages of this project.  We also thank the
referee for his/her helpful suggestions and improvements.


\section{Background}

In this section, we review the required background results.   

Let $X = \{x_1,\ldots,x_n\}$ be a set of vertices.   A {\it simplicial complex} $\Delta$ on
$X$ is a subset of the power set of $X$ that satisfies: $(i)$ if $F \in \Delta$ and
$G \subseteq F$, then $G \in \Delta$, and $(ii)$ $\{x_i\} \in \Delta$ for 
$i = 1,\ldots,n$.   An element $F \in \Delta$ is called a {\it face};  
maximal faces with respect to inclusion are called {\it facets}.  
If $F_1,\ldots,F_r$ are the facets of $\Delta$, then 
we write $\Delta = \langle F_1,\ldots,F_r \rangle$.  

For any face $F \in \Delta$, the {\it dimension} of $F$ is given by
$\dim(F) = |F|-1$.   Note that $\emptyset \in \Delta$ and 
$\dim(\emptyset) = -1$.  
The {\it dimension} of $\Delta$ is given by 
$\dim(\Delta) = \max\{\dim(F) ~|~ F \in \Delta\}$.
If $d = \dim(\Delta)$, then the {\it $f$-vector} of $\Delta$ is the $d+2$ tuple
\[f(\Delta) = (f_{-1},f_0,f_1,\ldots,f_d)\]
where $f_i$ is number of faces of dimension $i$ in $\Delta$.   
We write $f_i(\Delta)$ if we need to specify the simplicial complex.

Suppose that $I$ is a square-free monomial ideal of $R = k[x_1,\ldots,x_n]$ with
$k$ a field.   We use $\mathcal{G}(I)$ to denote the unique set
of minimal generators of $I$.
If we identify the variables of $R$ with the vertices $X$, we can
associate to $I$ two simplicial complexes.  The {\it non-face complex} 
(or {\it Stanley-Reisner complex}) is the 
simplicial complex
\[\delta_\mathcal{N}(I) = \left \{ \{x_{i_1},\ldots,x_{i_j}\} \subseteq X ~|~
x_{i_1}\cdots x_{i_j} \not\in I\right\}.\]
In other words, the faces of $\delta_{\mathcal{N}}(I)$ are 
in one-to-one correspondence
with the square-free monomials of $R$ not in the ideal $I$.  The {\it facet complex}
is the simplicial complex
\[\delta_\mathcal{F}(I) = \langle \{x_{i_1},\ldots,x_{i_j}\} \subseteq X ~|~ x_{i_1}\cdots x_{i_j} 
\in \mathcal{G}(I) \rangle. \]
The facets of $\delta_\mathcal{F}(I)$ are in one-to-one 
correspondence with the minimal generator of $I$.

In general, the two simplicial complexes  
$\delta_\mathcal{N}(I)$ and $\delta_\mathcal{F}(I)$ 
constructed from $I$ 
are very different.    In this note, we are interested
in the following family of monomial ideals:

\begin{definition}
A square-free monomial ideal $I$ is an {\it $f$-ideal} if
$f(\delta_\mathcal{N}(I)) = f(\delta_\mathcal{F}(I))$.
\end{definition}

\begin{example}
\label{mixed}
We illustrate the above ideas with the following 
example.   Let $I=\langle x_1x_4, x_2x_5, x_1x_2x_3, x_3x_4x_5 
\rangle \subseteq R=k[x_1, x_2, x_3, x_4, x_5]$ be a square-free monomial ideal. Then Figure \ref{figure} shows both the
facet and non-face complexes that are associated with $I$.
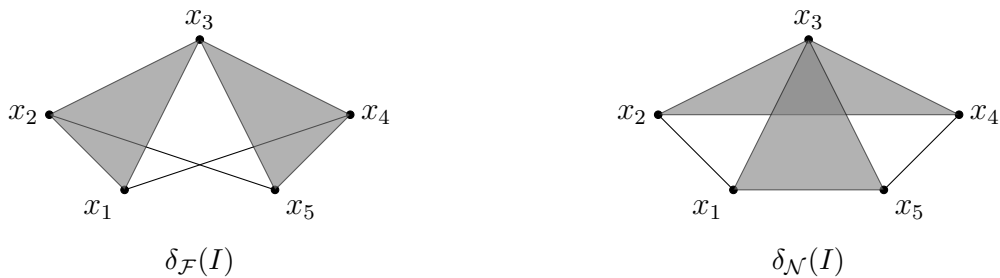
\begin{figure}[H]
    \centering
    \begin{subfigure}[h]{0.50\textwidth}
        \centering
        \begin{tikzpicture}
\draw[fill] (0,0) circle [radius=0.05];
\node [left] at (0,0) {$x_2$};
\draw[fill] (1,-1) circle [radius=0.05];
\node [below left] at (1,-1) {$x_1$};
\draw[fill] (2,1) circle [radius=0.05];
\node [above] at (2,1) {$x_3$};
\draw[fill] (3,-1) circle [radius=0.05];
\node [below right] at (3,-1) {$x_5$};
\draw[fill] (4,0) circle [radius=0.05];
\node [right] at (4,0) {$x_4$};
\path (3,-1) edge (0,0);
\path (1,-1) edge (4,0);
\draw[fill=gray, opacity=0.6] (0,0) -- (1,-1) -- (2,1) -- (0,0);
\draw[fill=gray, opacity=0.6] (2,1) -- (3,-1) -- (4,0) -- (2,1);
\end{tikzpicture}

        \caption*{$\delta_\mathcal{F} (I)$}
        
    \end{subfigure}
    ~ 
    \begin{subfigure}[h]{0.5\textwidth}
        \centering
        \begin{tikzpicture}
\draw[fill] (0,0) circle [radius=0.05];
\node [left] at (0,0) {$x_2$};
\draw[fill] (1,-1) circle [radius=0.05];
\node [below left] at (1,-1) {$x_1$};
\draw[fill] (2,1) circle [radius=0.05];
\node [above] at (2,1) {$x_3$};
\draw[fill] (3,-1) circle [radius=0.05];
\node [below right] at (3,-1) {$x_5$};
\draw[fill] (4,0) circle [radius=0.05];
\node [right] at (4,0) {$x_4$};
\path (1,-1) edge (0,0);
\path (3,-1) edge (4,0);
\draw[fill=gray, opacity=0.6] (0,0) -- (4,0) -- (2,1) -- (0,0);
\draw[fill=gray, opacity=0.6] (1,-1) -- (2,1) -- (3,-1) -- (1,-1);
\end{tikzpicture}
	        \caption*{$\delta_\mathcal{N}(I)$}
         \end{subfigure}
    \caption{Facet and non-face complexes of $I=\langle x_1x_4, x_2x_5, x_1x_2x_3, x_3x_4x_5 \rangle$.}
   \label{figure}
\end{figure}
\noindent
From Figure \ref{figure}, one can see that 
$f(\delta_\mathcal{F}(I))=f(\delta_\mathcal{N}(I))=(1,5,8,2)$, and therefore $I$ 
is an $f$-ideal.  We note that $I$ in this example is generated
 by monomials of different degrees.
In most of the other papers on this topic (e.g.,
\cite{abbasi,anwar,guo,guo2,mahmood,mahmood2}) the
focus has been on {\it equigenerated ideals}, i.e.,
ideals where are all generators have the same degree.
\end{example}

\begin{remark}\label{variablecase}
It is important to note that $\delta_\mathcal{F}(I)$
and $\delta_\mathcal{N}(I)$ may be simplicial complexes 
on different sets of vertices, and in particular, one must pay attention
to the ambient ring. 
  
For example, consider
 $I = \langle x_1,x_2x_3,x_2x_4,x_3x_4 \rangle \subseteq k[x_1,\ldots,
x_5]$.   For this ideal, $\delta_\mathcal{F}(I)$ is a simplicial
complex on $\{x_1,x_2,x_3,x_4\}$ with facets $\{\{x_1\},\{x_2,x_3\},
\{x_2,x_4\},\{x_3,x_4\}\}$.   So $f(\delta_\mathcal{F}(I)) = 
(1,4,3)$.   The vertices of 
$\delta_\mathcal{N}(I)$ are
$\{x_1,x_2,x_3,x_4,x_5\} \setminus \{x_1\}$.   Its facets 
are $\{\{x_2,x_5\},\{x_3,x_5\},\{x_4,x_5\}\}$.     From
this description, we see that $I$, in fact, is an $f$-ideal.

Note, however, that if $I$ is an $f$-ideal, and if every generator
of $I$ has degree at least two, then $\delta_{\mathcal{F}}(I)$ and 
$\delta_{\mathcal{N}}(I)$ must be simplicial complexes on
the vertex set $\{x_1,\ldots,x_n\}$.  To see why, since
every generator of $I$ has degree $\geq 2$, this implies that
$\{x_i\} \in \delta_{\mathcal{N}}(I)$ for all $i=1,\ldots,n$.  So,
$n = f_0(\delta_{\mathcal{N}}(I)) = f_0(\delta_{\mathcal{F}}(I))$,
that is, $\delta_{\mathcal{F}}(I)$ must also have $n$ vertices.

The above observation implies that
 the ideal  $I = \langle x_1x_2 \rangle \subseteq k[x_1,x_2,x_3]$
cannot be an $f$-ideal since it is generated by a monomial
of degree two, but 
$\delta_{\mathcal{F}}(I)$ is a simplicial complex on $\{x_1,x_2\}$,
but the vertices of $\delta_\mathcal{N}(I)$ are $\{x_1,x_2,x_3\}$.
\end{remark}


\begin{remark}\label{monomialpov}
Although the $f$-vector counts faces of a simplicial complex,
we can reinterpret the $f_j$'s
as counting square-free
monomials of a fixed degree.  In particular, 
\[f_j(\delta_\mathcal{N}(I)) = 
\#\left\{ m \in R_{j+1} ~\left|~
\begin{array}{c}
\mbox{$m$ is a square-free monomial of degree} \\
\mbox{$j+1$
and $m \not\in I_{j+1}$}
\end{array}
\right\}\right..\]
On the other hand, for the 
$f$-vector of $\delta_\mathcal{F}(I)$
we have
\[f_j(\delta_\mathcal{F}(I)) = \#\left\{m \in R_{j+1} ~\left|~
\begin{array}{c}
\mbox{$m$ is a square-free monomial of degree $j+1$ } \\
\mbox{that divides some $p \in \mathcal{G}(I)$}
\end{array}
\right\}\right..\]
Here, $R_t$, respectively $I_t$, denotes the degree $t$ homogeneous
elements of $R$, respectively $I$.
\end{remark}

We refine Remark \ref{monomialpov} by introducing a partition
of the set of square-free monomials of degree $d$.  This
partition will be useful in Section \ref{fvector}.    
For each integer $d \geq 0$, let $M_d \subseteq R_d$
denote the set of square-free monomial of degree
$d$ in $R_d$.   Given a square-free monomial ideal
$I$ with generating set $\mathcal{G}(I)$, set
\begin{eqnarray*}
A_d(I) &=&  \left\{m \in M_d ~\left|~ m \not\in I_d 
~\mbox{and $m$ does not divide any element of $\mathcal{G}(I)$} 
\right\}\right.\\
B_d(I) & =&  \left\{m \in M_d ~\left|~ m \not\in I_d 
~\mbox{and $m$ divides some element of $\mathcal{G}(I)$} 
\right\}\right.\\
C_d(I) & =& \{m \in M_d ~|~ m \in \mathcal{G}(I) \}, ~~\mbox{and} \\
D_d(I) & = & \{m \in M_d ~|~ m \in I_d \setminus \mathcal{G}(I)\}.
\end{eqnarray*}
So, for any square-free monomial ideal $I$ and integer
$d \geq 0$ we have
the partition
\begin{equation}\label{partition}
M_d = A_d(I) \sqcup B_d(I) \sqcup C_d(I) \sqcup D_d(I).
\end{equation}
Using this notation, we have the following characterization
of $f$-ideals.

\begin{lemma}\label{partitionchar}
Let $I \subseteq k[x_1,\ldots,x_n]$ be a square-free 
monomial ideal.  Then $I$ is an $f$-ideal if and only
if $|A_d(I)| = |C_d(I)|$ for all $0 \leq d \leq n$.
\end{lemma}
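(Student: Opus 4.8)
The plan is to express each entry $f_j$ of the two $f$-vectors in terms of the cardinalities of the sets in the partition \eqref{partition}, and then compare. First I would fix $j \geq 0$ and set $d = j+1$. Using Remark \ref{monomialpov}, the number $f_j(\delta_\mathcal{N}(I))$ counts square-free monomials $m$ of degree $d$ with $m \notin I_d$; by the partition \eqref{partition} these are exactly the monomials in $A_d(I) \sqcup B_d(I)$, so $f_j(\delta_\mathcal{N}(I)) = |A_d(I)| + |B_d(I)|$. Similarly, $f_j(\delta_\mathcal{F}(I))$ counts square-free monomials $m$ of degree $d$ that divide some element of $\mathcal{G}(I)$. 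A monomial that divides a generator is either itself a generator (landing in $C_d(I)$) or a proper divisor, in which case it cannot lie in $I_d$ (a proper square-free divisor of a minimal generator is never in $I$, since $I$ is generated in square-free monomials and a smaller monomial dividing a generator would contradict minimality unless it is a non-multiple), hence lies in $B_d(I)$. Conversely everything in $B_d(I)$ divides a generator by definition, and everything in $C_d(I)$ divides a generator (itself). Thus $f_j(\delta_\mathcal{F}(I)) = |B_d(I)| + |C_d(I)|$.

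With these two formulas in hand, the equality $f_j(\delta_\mathcal{N}(I)) = f_j(\delta_\mathcal{F}(I))$ becomes $|A_d(I)| + |B_d(I)| = |B_d(I)| + |C_d(I)|$, i.e.\ $|A_d(I)| = |C_d(I)|$, after cancelling the common term $|B_d(I)|$. So $I$ is an $f$-ideal (meaning these equalities hold for every $j$ in the relevant range, equivalently for every $d$ with $1 \leq d \leq n$) if and only if $|A_d(I)| = |C_d(I)|$ for all such $d$.

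It remains to handle the boundary cases $d = 0$ and the mismatch between ``$f$-vectors are equal'' and ``$f_j$'s are equal for each $j$.'' For $d = 0$: $M_0 = \{1\}$, and $1 \notin I$ (a proper ideal), and $1$ does not divide any generator nontrivially — actually $1$ divides every generator, but by convention we only care that $1 \notin \mathcal{G}(I)$, so $1 \in B_0(I)$ while $A_0(I) = C_0(I) = \emptyset$; hence $|A_0(I)| = |C_0(I)|$ holds automatically and including $d=0$ in the statement costs nothing. The subtler point, which I expect to be the main obstacle to writing cleanly, is that ``$f(\delta_\mathcal{N}(I)) = f(\delta_\mathcal{F}(I))$'' requires the two $f$-vectors to have the \emph{same length}, i.e.\ $\dim \delta_\mathcal{N}(I) = \dim \delta_\mathcal{F}(I)$, and one must check this is equivalent to: for every $d$, $|A_d(I)| + |B_d(I)| = |B_d(I)| + |C_d(I)|$, including the degrees past the dimension of one complex (where the corresponding $f_j$ is implicitly $0$). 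The clean way to phrase it: the equality of $f$-vectors as tuples is equivalent to $f_j(\delta_\mathcal{N}(I)) = f_j(\delta_\mathcal{F}(I))$ for \emph{all} $j \geq -1$ with the convention that $f_j = 0$ beyond the dimension; and since for $d > n$ all four sets are empty, only $0 \leq d \leq n$ matters. I would fold this observation into a short sentence rather than belabor it, since the computational heart is just the cancellation of $|B_d(I)|$.
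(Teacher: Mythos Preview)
Your proposal is correct and follows essentially the same approach as the paper: express $f_j(\delta_\mathcal{N}(I)) = |A_{j+1}(I)| + |B_{j+1}(I)|$ and $f_j(\delta_\mathcal{F}(I)) = |B_{j+1}(I)| + |C_{j+1}(I)|$ via Remark~\ref{monomialpov}, then cancel $|B_{j+1}(I)|$. The paper's version is terser---it simply asserts the two formulas and concludes---whereas you spell out why divisors of generators land in $B_d(I) \sqcup C_d(I)$ and handle the $d=0$ and vector-length boundary cases explicitly, but the substance is identical.
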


\begin{proof}
Note that Remark \ref{monomialpov} implies that 
\[f_j(\delta_\mathcal{N}(I)) = |A_{j+1}(I)| + |B_{j+1}(I)| ~~
\mbox{for all $j \geq -1$}\]
and 
\[f_j(\delta_\mathcal{F}(I)) = |B_{j+1}(I)| + |C_{j+1}(I)| ~~
\mbox{for all $j \geq -1$}.\]
The conclusion now follows since 
$f_j(\delta_\mathcal{F}(I)) = f_j(\delta_\mathcal{N}(I))$
for all $-1 \leq j \leq n-1$
if and only if $|A_d(I)| = |C_d(I)|$ for all $0 \leq d \leq n$.
\end{proof}


\section{The Newton complementary dual and $f$-vectors}

We introduce the generalized Newton complementary dual of a monomial
ideal as defined by \cite{ansladi} (based upon \cite{costa}).  
We then show
how the $f$-vector behaves under this operation.

\begin{definition}
Let $I \subseteq R=k[x_1,\ldots,x_n]$ be a monomial
ideal with $\mathcal{G}(I) = \{m_1,\ldots,m_p\}$.
Suppose that $m_i = x_1^{\alpha_{i,1}}x_2^{\alpha_{i,2}}
\cdots x_n^{\alpha_{i,n}}$
for all $i=1,\ldots, p$. 
Let $\beta=(\beta_1,\ldots,\beta_n)\in \mathbb{N}^n$
be a vector such that $\beta_{i} \geq \alpha_{k,l}$
for all $l=1,\ldots,n$ and $k=1,\ldots,p$.
The {\it generalized Newton complementary dual
of $I$} determined by $\beta$ is the ideal
\[
\widehat{I}^{[\beta]} = \left.\left\langle \frac{x^{\beta}}{m} ~\right|~ m\in \mathcal{G}(I)\right\rangle 
= \left\langle \frac{x^{\beta}}{m_1}, \frac{x^{\beta}}{m_2}, \ldots, \frac{x^{\beta}}{m_p}  \right\rangle
~~\mbox{
where $x^\beta = x_1^{\beta_1}\cdots x_n^{\beta_n}$.}\]
\end{definition}

\begin{remark}
If $I \subseteq R = k[x_1,\ldots,x_n]$ is a square-free
monomial ideal, then one can take $\beta = (1,\ldots,1) = 
{\bf 1}$, i.e., $x^\beta = x_1\cdots x_n$. 
 For simplicity, we denote $\widehat{I}^{[1]}$ 
by $\widehat{I}$ and call it the {\it complementary dual} of $I$.
Note that we have $\skew{5.2}\widehat{\widehat{I}}  =I$.
\end{remark}

\begin{example}  \label{mixed2}
We return to the ideal
$I$ of
Example \ref{mixed}.   For this ideal we have
\[\widehat{I} = \left\langle
\frac{x_1\cdots x_5}{x_1x_4},
\frac{x_1\cdots x_5}{x_2x_5},
\frac{x_1\cdots x_5}{x_1x_2x_3},
\frac{x_1\cdots x_5}{x_3x_4x_5} \right\rangle
= \langle x_2x_3x_5,x_1x_3x_4,x_4x_5,x_1x_2 \rangle.
\]
\end{example}

The next lemma is key to understanding how the
$f$-vector behaves under the duality.

\begin{lemma}\label{bijection}
Let $I \subseteq R = k[x_1,\ldots,x_n]$ be 
a square-free monomial ideal.  For all integers
$j = -1,\ldots,n-1$, there is a bijection
\[\left\{m \in R_{j+1} ~|~ \mbox{$m$ a square-free monomial
that divides some $p \in \mathcal{G}(I)$} \right\} \]
\[\leftrightarrow
\left\{m \in \widehat{I}_{n-j-1}~|~ \mbox{$m$ a square-free
monomial} \right\}.
\]
\end{lemma}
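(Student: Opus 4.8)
The plan is to write down the obvious candidate for the bijection and check it is well-defined in both directions. Given a square-free monomial $m \in R_{j+1}$ that divides some $p \in \mathcal{G}(I)$, send it to $m' = \frac{x_1\cdots x_n}{m}$, a square-free monomial of degree $n-(j+1) = n-j-1$. I would first observe that $m' \in \widehat{I}$: since $m \mid p$ for some generator $p = m \cdot q$ of $I$, we have $\frac{x_1\cdots x_n}{p} = \frac{x_1 \cdots x_n}{m q} = \frac{m'}{q}$, so $\frac{x_1\cdots x_n}{p}$ (which is a generator of $\widehat{I}$, using square-freeness so that $p \mid x_1\cdots x_n$) divides $m'$; hence $m' \in \widehat{I}_{n-j-1}$. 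Thus the assignment $m \mapsto \frac{x_1\cdots x_n}{m}$ lands in the target set.

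For the inverse, given a square-free monomial $m' \in \widehat{I}_{n-j-1}$, send it to $m = \frac{x_1\cdots x_n}{m'}$, a square-free monomial of degree $j+1$. I need to check $m$ divides some $p \in \mathcal{G}(I)$. Since $m' \in \widehat{I}$, it is divisible by some generator $\frac{x_1\cdots x_n}{p}$ of $\widehat{I}$ with $p \in \mathcal{G}(I)$; writing $m' = \frac{x_1\cdots x_n}{p}\cdot t$ for a square-free monomial $t$ coprime to $\frac{x_1\cdots x_n}{p}$, one computes $m = \frac{x_1\cdots x_n}{m'} = \frac{p}{t}$, and since $t \mid p$ this shows $m \mid p$, so $m$ lies in the source set. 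These two assignments are mutually inverse because $\frac{x_1\cdots x_n}{\,x_1\cdots x_n / m\,} = m$ for any square-free $m$, which is immediate. This establishes the bijection for each $j$.

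The only genuinely delicate point is bookkeeping with square-freeness and the relation $t \mid p$: one must be careful that when $\frac{x_1\cdots x_n}{p}$ divides the square-free monomial $m'$, the complementary factor $t$ divides $p$ (equivalently, $\operatorname{supp}(t) \subseteq \operatorname{supp}(p)$), which is where the hypothesis that $I$ is square-free — so that every exponent is $0$ or $1$ and division of monomials is just containment of supports — does the work. I expect this support-level argument to be the main (though minor) obstacle; everything else is a formal manipulation of the map $m \leftrightarrow \frac{x_1\cdots x_n}{m}$. It may be cleanest to phrase the whole argument in terms of subsets of $\{x_1,\ldots,x_n\}$: identifying a square-free monomial with its support, $m \mapsto \frac{x_1\cdots x_n}{m}$ is complementation of subsets, "divides some generator of $I$" becomes "is contained in the support of some $p \in \mathcal{G}(I)$", and "lies in $\widehat{I}$" becomes "contains the complement of $\operatorname{supp}(p)$ for some $p \in \mathcal{G}(I)$" — and the two conditions are visibly exchanged under complementation.
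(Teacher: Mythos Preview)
Your proposal is correct and follows essentially the same approach as the paper: both use the map $m \mapsto \frac{x_1\cdots x_n}{m}$ and verify that $m \mid p$ for some $p \in \mathcal{G}(I)$ is equivalent, under this complementation, to being divisible by some generator $\frac{x_1\cdots x_n}{p}$ of $\widehat{I}$. The only cosmetic difference is that the paper phrases the argument as injectivity plus surjectivity of $\varphi$ rather than constructing the inverse explicitly, and it does not spell out the support-set reformulation you mention at the end.
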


\begin{proof}  Fix a $j \in \{-1,\ldots,n-1\}$ and let 
$A$ denote the first set, and let $B$ denote the second set.  We claim
that the map $\varphi:A \rightarrow B$ given by 
\[\varphi(m) = \frac{x_1x_2\cdots x_n}{m}\]
gives the desired bijection.    This map is defined because
if $m \in A$, there is a generator $p \in \mathcal{G}(I)$ such that
$m|p$.  But that then means that $\frac{x_1\cdots x_n}{p}$ divides
$\varphi(m) = \frac{x_1\cdots x_n}{m}$, and consequently,
$\varphi(m) \in \widehat{I}$.  Moreover, since $\deg(m) =j+1$,
we have $\deg(\varphi(m))= n-j-1$.  Finally, since
$m$ is a square-free monomial, so is $\varphi(m)$.

It is immediate that the map is injective.
For surjectivity,
let $m \in B$.   It suffices to show that the square-free monomial
$m' = \frac{x_1 \cdots x_n}{m} \in A$ since $\varphi(m') = m$.  
By our construction of $m'$ it follows that $\deg(m') = j+1$.  Also, because
$m \in B$, there is some $p \in \mathcal{G}(I)$ such
that $\frac{x_1\cdots x_n}{p}$ divides $m$.  But this then means that
$m'$ divides $p$, i.e., $m' \in A$.
\end{proof}
\begin{remark}
Using the notation introduced before Lemma \ref{partitionchar},
Lemma \ref{bijection} gives a bijection
between $B_{j+1}(I) \sqcup C_{j+1}(I)$ and
$C_{n-j-1}(\widehat{I}) \sqcup D_{n-j-1}(\widehat{I})$
for all $j = -1,\ldots,n-1$.
\end{remark}

Lemma \ref{bijection} can be used to
relate the $f$-vectors of $\delta_\mathcal{N}(I),
\delta_\mathcal{F}(I), \delta_\mathcal{N}(\widehat{I})$, and
$\delta_\mathcal{F}(\widehat{I})$.

\begin{corollary}\label{maincor}
Let $I \subseteq R = k[x_1,\ldots,x_n]$ be a square-free monomial
ideal.  
\begin{enumerate}
\item[$(i)$] If $f(\delta_\mathcal{F}(I)) = (f_{-1},f_0,\ldots,f_d)$, then
\[f(\delta_\mathcal{N}(\widehat{I})) = 
\left(\binom{n}{0}- f_{n-1},\ldots,\binom{n}{i}-f_{n-i-1},\ldots,
\binom{n}{n-1}-f_0,\binom{n}{n}-f_{-1}\right).\]
\item[$(ii)$] If $f(\delta_\mathcal{N}(I)) = (f_{-1},f_0,\ldots,f_d)$, then
\[f(\delta_\mathcal{F}(\widehat{I})) = 
\left(\binom{n}{0}- f_{n-1},\ldots,\binom{n}{i}-f_{n-i-1},\ldots,
\binom{n}{n-1}-f_0,\binom{n}{n}-f_{-1}\right).\]
\end{enumerate}
In both cases, $f_i = 0$ if $i > d$.
\end{corollary}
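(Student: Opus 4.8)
The plan is to deduce both statements from Lemma~\ref{bijection} together with the counting identities for $f_j(\delta_\mathcal{N}(-))$ and $f_j(\delta_\mathcal{F}(-))$ recorded in the proof of Lemma~\ref{partitionchar}. The two parts are symmetric, so I would prove $(i)$ in detail and obtain $(ii)$ by applying $(i)$ to $\widehat{I}$ in place of $I$, using $\skew{5.2}\widehat{\widehat{I}} = I$; this requires the preliminary observation that the formula in $(i)$ is an involution on $f$-vectors (replacing $(f_{-1},\ldots,f_d)$, padded with zeros, by its ``complement'' and reversing, then doing it again, returns the original), which is a routine check.

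For $(i)$, fix $j \in \{-1,\ldots,n-1\}$ and unwind the definitions via Remark~\ref{monomialpov}. The face of dimension $i$ of $\delta_\mathcal{N}(\widehat{I})$ are in bijection with square-free monomials of degree $i+1$ \emph{not} in $\widehat{I}$; there are $\binom{n}{i+1}$ square-free monomials of degree $i+1$ in total, so
\[
f_i(\delta_\mathcal{N}(\widehat{I})) = \binom{n}{i+1} - \#\{m \in \widehat{I}_{i+1} \mid m \text{ square-free}\}.
\]
Now set $i+1 = n-j-1$, i.e. $j = n-i-2$; Lemma~\ref{bijection} identifies $\#\{m \in \widehat{I}_{n-j-1} \mid m \text{ square-free}\}$ with the number of square-free monomials of degree $j+1 = n-i-1$ dividing some element of $\mathcal{G}(I)$, which by Remark~\ref{monomialpov} is exactly $f_{j}(\delta_\mathcal{F}(I)) = f_{n-i-2}(\delta_\mathcal{F}(I))$. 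Hence
\[
f_i(\delta_\mathcal{N}(\widehat{I})) = \binom{n}{i+1} - f_{n-i-2}(\delta_\mathcal{F}(I)) = \binom{n}{i+1} - f_{n-(i+1)-1}.
\]
Re-indexing with $k = i+1$ running over $0,\ldots,n$ gives the $k$-th entry (counting from the $f_{-1}$-slot) as $\binom{n}{k} - f_{n-k-1}$, which is precisely the displayed vector in $(i)$; the convention $f_t = 0$ for $t > d$ handles the cases where $n-k-1$ exceeds $d$, and the $f_{-1}$ term of $\delta_\mathcal{N}(\widehat{I})$ (the empty face, always present) comes out as $\binom{n}{0} - f_{n-1} = 1 - 0 = 1$ whenever $\dim \delta_\mathcal{F}(I) < n-1$, consistent with the formula. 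One must also check the range is correct, i.e. that $f_i(\delta_\mathcal{N}(\widehat{I}))$ vanishes for $i$ large enough that the listed vector would run past its last coordinate; this follows since for $k$ close to $n$ we are subtracting $f_{n-k-1}$ with small index, and a short argument shows the difference is nonnegative and the complex indeed has a face of that dimension exactly when the entry is positive.

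The main technical point to be careful about is the bookkeeping of indices and the two different ``origins'' for $f$-vectors: $\delta_\mathcal{F}(I)$ is indexed starting at $f_{-1}$ and may have dimension much smaller than $n-1$, while the output vector for $\delta_\mathcal{N}(\widehat{I})$ has full length $n+2$. Getting the substitution $j \leftrightarrow i$ and the zero-padding convention to line up is where an error is most likely to creep in, so I would state explicitly that we extend $f(\delta_\mathcal{F}(I))$ to the tuple $(f_{-1},\ldots,f_{n-1})$ by appending zeros before forming the complement-and-reverse, and verify the endpoints $i = -1$ and $i = n-1$ by hand. Everything else is a direct substitution into Lemma~\ref{bijection}.
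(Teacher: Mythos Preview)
Your proposal is correct and follows essentially the same route as the paper: both arguments combine Remark~\ref{monomialpov} with Lemma~\ref{bijection} to count square-free monomials in $\widehat{I}_{n-j-1}$ and then subtract from $\binom{n}{n-j-1}$, arriving at the identity $f_i(\delta_\mathcal{N}(\widehat{I})) = \binom{n}{i+1} - f_{n-i-2}(\delta_\mathcal{F}(I))$. The only minor difference is in part~$(ii)$: the paper simply reruns the computation of $(i)$ with $I$ replaced by $\widehat{I}$ (using $\skew{5.2}\widehat{\widehat{I}} = I$), which yields the formula for $f_j(\delta_\mathcal{F}(\widehat{I}))$ directly and avoids your detour through the involution check on the complement-and-reverse operation.
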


\begin{proof}
$(i)$.   Fix some $j \in \{-1,0,\ldots,n-1\}$.   
By Remark \ref{monomialpov} and Lemma \ref{bijection}
we have
\begin{eqnarray*}
f_j(\delta_\mathcal{F}(I)) & = & 
\#\left\{m \in R_{j+1} ~|~ \mbox{$m$ a square-free monomial
that divides some $p \in \mathcal{G}(I)$} \right\} 
\\
& = & \#\left\{m \in \widehat{I}_{n-j-1}~|~ \mbox{$m$ a square-free
monomial} \right\} \\
& = & \#\left\{m \in R_{n-j-1}~|~ \mbox{$m$ a square-free
monomial}\right\} \\
&&\hspace{.4cm}-~ \#\left\{m \not\in \widehat{I}_{n-j-1}~|~ \mbox{$m$ a square-free
monomial}\right\} \\
& = & \binom{n}{n-j-1} - f_{n-j-2}(\delta_\mathcal{N}(\widehat{I})).
\end{eqnarray*}
Rearranging, and letting $l = n-j-2$ gives
\[f_{l}(\delta_\mathcal{N}(\widehat{I})) = 
\binom{n}{l+1} - f_{n-l-2}(\delta_\mathcal{F}(I))
~~\mbox{for $l=-1,\ldots,n-1$,}\]
as desired.

$(ii)$ The proof is similar to $(i)$.  Indeed, if we replace $I$ with $\widehat{I}$
we showed that 
\[f_j(\delta_\mathcal{F}(\widehat{I})) = 
\binom{n}{n-j-1} - f_{n-j-2}(\delta_\mathcal{N}(I)) =
\binom{n}{j+1} -f_{n-j-2}(\delta_\mathcal{N}(I)) \]
for all $j \in \{-1,0,\ldots,n-1\}$.
\end{proof}

We end this section with some consequences related to the 
Kruskal-Katona theorem;  although we do not use this
result in the sequel, we feel it is of independent interest.

We follow
the notation of Herzog-Hibi 
\cite[Section 6.4]{herzog}.   The 
{\it Macaulay expansion} of $a$ with respect to $j$ is the expansion
\[a = \binom{a_j}{j}+\binom{a_{j-1}}{j-1}+\cdots + \binom{a_k}{k}\]
where $a_j > a_{j-1} > \cdots > a_k \geq k \geq 1$.  
This expansion is unique (see
\cite[Lemma 6.3.4]{herzog}).
For a fixed $a$ and $j$, we use the Macaulay expansion
of $a$ with respect to $j$ to define
\[a^{(j)} = \binom{a_j}{j+1}+\binom{a_{j-1}}{j}+\cdots + \binom{a_k}{k+1}.\]
Kruskal-Katona's theorem  \cite{ka,kr} then classifies what vectors
can be the $f$-vector of a simplicial complex using
the Macaulay expansion operation.   This equivalence, plus
two new equivalent statements which use the complementary
dual, are given below.

\begin{theorem}\label{kk}

Let $(f_{-1},f_0,\ldots,f_d) \in \mathbb{N}_+^{d+2}$ with
$f_{-1} = 1$.
   Then the following are equivalent:
    \begin{enumerate}
    \item[$(i)$] $(f_{-1},f_0,f_1,\ldots,f_d)$ is the $f$-vector of a
      simplicial complex on $n=f_0$ vertices.
      \item[$(ii)$] $f_t \leq f_{t-1}^{(t)}$ for all $1 \leq t \leq d$.
      \item[$(iii)$] 
\[\left(\binom{n}{0}- f_{n-1},\ldots,\binom{n}{i}-f_{n-i-1},\ldots,
\binom{n}{n-1}-f_0,\binom{n}{n}-f_{-1}\right)\]
is the $f$-vector of a simplicial complex on 
$\binom{n}{1}-f_{n-2}$ vertices (where $f_i = 0$ if $i > d$).
  \item[$(iv)$] \[\binom{n}{t+1} - \left[\binom{n}{t+2} - f_{t+1}\right]^{(n-t-2
)} \leq f_t  ~~~~~~~
\hspace{.3cm}\mbox{for all $0 \leq t \leq d-1$}.\] 
      \end{enumerate}
    \end{theorem}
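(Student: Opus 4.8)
The plan is to anchor everything on the classical Kruskal--Katona theorem: the equivalence $(i)\Leftrightarrow(ii)$ is precisely that theorem \cite{ka,kr} (see also \cite[Theorem 6.4.1]{herzog}), and I would graft $(iii)$ and $(iv)$ onto it. It is convenient to write $D$ for the involution sending a vector $(f_{-1},f_0,\ldots,f_{n-1})$ (padded with zeros) to
\[D(f)=\left(\binom{n}{0}-f_{n-1},\ \binom{n}{1}-f_{n-2},\ \ldots,\ \binom{n}{n}-f_{-1}\right),\]
so that $g:=D(f)$ is the vector appearing in $(iii)$ and underlying the inequalities of $(iv)$. Throughout I assume $\Delta$ is not the full simplex, i.e.\ $d\le n-2$; the case $d=n-1$ makes $g$ begin with $0$ and would be treated separately as a degenerate case, and trailing zeros of $g$ are to be dropped.

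The main step is $(i)\Leftrightarrow(iii)$, which I would deduce from Corollary \ref{maincor}, the involution $\widehat{\widehat{I}}=I$, and two elementary realizability facts: (a) every simplicial complex $\Delta$ on $\{x_1,\ldots,x_n\}$ is $\delta_\mathcal{N}(I_\Delta)$ for its Stanley--Reisner ideal, and (b) every simplicial complex $\Gamma=\langle F_1,\ldots,F_r\rangle$ is $\delta_\mathcal{F}(J)$ for the ideal $J$ generated by the monomials $\prod_{x_i\in F_j}x_i$ (these are pairwise incomparable, hence are precisely $\mathcal{G}(J)$). For $(i)\Rightarrow(iii)$ I would take $I=I_\Delta$; by Corollary \ref{maincor}$(ii)$ the honest simplicial complex $\delta_\mathcal{F}(\widehat{I})$ has $f$-vector $g$, and the number of its vertices is its degree-$0$ entry $\binom{n}{1}-f_{n-2}$, which is exactly the assertion of $(iii)$. (In fact $\delta_\mathcal{F}(\widehat{I_\Delta})$ is the Alexander dual of $\Delta$, which conceptually explains why $g=D(f)$ records face numbers at all.) For $(iii)\Rightarrow(i)$ I would reverse this: realize $g$ as $f(\delta_\mathcal{F}(J))$ with $J$ as in (b), viewed inside $k[x_1,\ldots,x_n]$, set $I=\widehat{J}$, and apply Corollary \ref{maincor}$(ii)$ to $I$ together with $\widehat{\widehat{J}}=J$ to obtain $g=f(\delta_\mathcal{F}(J))=D\bigl(f(\delta_\mathcal{N}(\widehat{J}))\bigr)$, whence $f(\delta_\mathcal{N}(\widehat{J}))=D(g)=f$; thus $\delta_\mathcal{N}(\widehat{J})$ realizes $(f_{-1},\ldots,f_d)$ on $f_0=n$ vertices. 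One checks en route that $\widehat{J}\ne R$ in the relevant range, so that Corollary \ref{maincor} genuinely applies.

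For $(iii)\Leftrightarrow(iv)$ I would feed $g$---whose degree-$(-1)$ entry is $\binom{n}{0}-f_{n-1}=1$---into the already-available equivalence $(i)\Leftrightarrow(ii)$: statement $(iii)$ holds if and only if $g_s\le g_{s-1}^{(s)}$ for all $1\le s\le n-1$, where $g_s=\binom{n}{s+1}-f_{n-s-2}$. Substituting $s=n-t-2$ and using $\binom{n}{n-t-1}=\binom{n}{t+1}$ and $\binom{n}{n-t-2}=\binom{n}{t+2}$, the inequality $g_s\le g_{s-1}^{(s)}$ rearranges into
\[\binom{n}{t+1}-\left[\binom{n}{t+2}-f_{t+1}\right]^{(n-t-2)}\le f_t,\]
and as $s$ runs over $\{n-d-1,\ldots,n-2\}$ this is exactly the list of inequalities in $(iv)$, i.e.\ for $0\le t\le d-1$. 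The remaining cases are vacuous: for $s=n-1$ one has $g_s=\binom{n}{n}-f_{-1}=0$, and for $s\le n-d-2$ one has $f_{n-s-1}=0$, so $g_{s-1}=\binom{n}{s}$ is maximal and hence $g_{s-1}^{(s)}=\binom{n}{s+1}\ge g_s$ automatically. This gives $(iii)\Leftrightarrow(iv)$ and closes the cycle.

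The only place I anticipate friction is the bookkeeping: keeping the index shift $s\leftrightarrow n-t-2$ and the trailing-zero conventions for $g$ straight, and disposing of the degenerate full-simplex case (where $\widehat{I_\Delta}$ is not a proper ideal and $g$ acquires a leading zero). The substantive input---Corollary \ref{maincor} (equivalently, the Alexander-duality behaviour of $f$-vectors), the involution $\widehat{\widehat{I}}=I$, and the two realizability facts---is all on hand, so the argument should go through routinely once those conventions are fixed.
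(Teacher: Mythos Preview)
Your proposal is correct and follows essentially the same route as the paper: $(i)\Leftrightarrow(ii)$ is quoted as Kruskal--Katona, $(i)\Leftrightarrow(iii)$ is obtained from Corollary~\ref{maincor} together with the duality $\widehat{\widehat{I}}=I$, and $(iii)\Leftrightarrow(iv)$ comes from applying Kruskal--Katona to the transformed vector and making the substitution $s=n-t-2$ (the paper writes this as $i=n-3-t$). You are more careful than the paper about the vacuous range of indices and the full-simplex edge case, but the argument is the same.
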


\begin{proof}
$(i) \Leftrightarrow (ii)$. This equivalence is 
the Kruskal-Katona theorem (see \cite{ka,kr}).

$(i) \Leftrightarrow (iii)$.  This equivalence follows from
Corollary \ref{maincor} and the Kruskal-Katona equivalence of
$(i) \Leftrightarrow (ii)$.  In particular, one lets 
$I$ be the square-free monomial ideal with 
$f(\delta_\mathcal{N}(I)) = (f_{-1},f_0,\ldots,f_d)$, and then
one uses Corollary \ref{maincor} to show that $(iii)$ is a valid
$f$-vector.  The duality of $I$ and $\widehat{I}$ is used to show the
reverse direction.

$(iii) \Leftrightarrow (iv)$.
Corollary \ref{maincor} and the equivalence of $(i) \Leftrightarrow (ii)$
implies that the vector of $(iii)$ is an $f$-vector 
of a simplicial complex if and only if, 
for each $0\leq i \leq n-2$,
\[\binom{n}{i+2} - f_{n-i-3} \leq \left[\binom{n}{i+1}- f_{n-i-2}\right]^{(i+1)}.\]
The result now follows if we take $i = n-3-t$ and rearrange the above equation.
\end{proof}

\begin{discussion}
It is prudent to make some observations 
about the Alexander dual, although this material
is not required for our paper.
Recall that for any simplicial complex $\Delta$ on a vertex set
$X$, the 
{\it Alexander dual} of $\Delta$ is the simplicial 
complex on $X$ given by  
\[\Delta^\vee = \{F \subseteq X ~|~ X \setminus F \not\in \Delta\}.\]
It is well-known (for example, see \cite[Corollary 1.5.5]{herzog}) that
if $\Delta = \langle F_1,\ldots,F_s \rangle$, then 
$\mathcal{N}(\Delta^\vee)$, the non-face
ideal of $\Delta^\vee$ (i.e., the ideal generated by
the square-free monomials $x_{i_1}\cdots x_{i_j}$ where
$\{x_{i_1},\ldots,x_{i,j}\} \not\in \Delta$) is given by
\[\mathcal{N}(\Delta^\vee) = \langle m_{F_1^c},\ldots,m_{F_s^c} \rangle\]
where $m_{F_i^c} = \prod_{x \in F_i^c} x$ with $F_i^c = X \setminus F_i$.
But we can also write $m_{F_i^c} = (\prod_{x\in X} x )/m_{F_i}
= \frac{x_1\cdots x_n}{m_{F_i}}$.  Now tracing through the definitions,
if $I$ is square-free monomial ideal, then
\[\widehat{I} = \mathcal{N}((\delta_{\mathcal{F}}(I))^\vee),\]
i.e., the complementary dual of $I$ is the non-face ideal of 
the Alexander dual of the facet complex of $I$.  This, in turn, implies
that $\delta_\mathcal{N}(\widehat{I}) = \delta_\mathcal{N}(\mathcal{N}((\delta_{\mathcal{F}}(I))^\vee) = (\delta_{\mathcal{F}}(I))^\vee$.
\end{discussion}


\section{$f$-ideals and applications}
\label{fvector}

We use the tools of the previous sections to prove our main
theorem about $f$-ideals and to deduce some new consequences
about this class of ideals.
Our main theorem is an immediate application of Corollary
\ref{maincor}.

\begin{theorem}\label{maintheorem2}
Let $I$ be a square-free monomial ideal of $R = k[x_1,\ldots,x_n]$.
Then $I$ is an $f$-ideal if and only if $\widehat{I}$ is an
$f$-ideal.
\end{theorem}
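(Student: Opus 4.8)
The plan is to derive Theorem \ref{maintheorem2} directly from Corollary \ref{maincor} by comparing the relevant $f$-vectors. First I would recall the defining condition: $I$ is an $f$-ideal precisely when $f(\delta_\mathcal{F}(I)) = f(\delta_\mathcal{N}(I))$, and similarly $\widehat{I}$ is an $f$-ideal precisely when $f(\delta_\mathcal{F}(\widehat{I})) = f(\delta_\mathcal{N}(\widehat{I}))$. The key observation is that Corollary \ref{maincor} expresses both $f(\delta_\mathcal{N}(\widehat{I}))$ (part $(i)$) and $f(\delta_\mathcal{F}(\widehat{I}))$ (part $(ii)$) in terms of the \emph{same} transformation applied to $f(\delta_\mathcal{F}(I))$ and $f(\delta_\mathcal{N}(I))$ respectively. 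Writing $T$ for the map sending a vector $(g_{-1},\dots,g_e)$ to $\left(\binom{n}{0}-g_{n-1},\dots,\binom{n}{n}-g_{-1}\right)$ (with the convention $g_i=0$ for $i>e$), Corollary \ref{maincor} says $f(\delta_\mathcal{N}(\widehat{I})) = T(f(\delta_\mathcal{F}(I)))$ and $f(\delta_\mathcal{F}(\widehat{I})) = T(f(\delta_\mathcal{N}(I)))$.

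The main step is then to note that $T$ is injective: each entry of $T(g)$ determines, via subtraction from a fixed binomial coefficient, the corresponding entry of $g$. Hence $f(\delta_\mathcal{F}(I)) = f(\delta_\mathcal{N}(I))$ if and only if $T(f(\delta_\mathcal{F}(I))) = T(f(\delta_\mathcal{N}(I)))$, which by the two formulas is exactly $f(\delta_\mathcal{N}(\widehat{I})) = f(\delta_\mathcal{F}(\widehat{I}))$. That is precisely the statement that $I$ is an $f$-ideal if and only if $\widehat{I}$ is an $f$-ideal. A small bookkeeping point to address is that the two vectors being compared must have the same length before one can apply $T$ entrywise and invert it; but this is handled by the stated convention $f_i = 0$ for $i > d$, so one can always pad both vectors to the common length $n+1$ and argue coordinatewise. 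One could also phrase this using Lemma \ref{partitionchar}, comparing $|A_d(I)| = |C_d(I)|$ with $|A_d(\widehat{I})| = |C_d(\widehat{I})|$ via the bijections recorded after Lemma \ref{bijection}, but going through Corollary \ref{maincor} is cleaner.

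I do not anticipate a genuine obstacle here; the content has all been front-loaded into Corollary \ref{maincor}, and what remains is the elementary observation that subtracting from fixed constants is an invertible operation. The only thing requiring a modicum of care is making the indexing and the zero-padding convention explicit so that the two $f$-vectors are literally compared entry-by-entry over the range $-1 \le i \le n-1$; once that is set up, the equivalence is immediate. So the proof will be short: state the two identities from Corollary \ref{maincor}, observe the transformation is a bijection on vectors of length $n+1$ (after padding), and conclude.

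\begin{proof}
Since $I$ is square-free, by Remark following Lemma \ref{partitionchar} we may regard all four $f$-vectors $f(\delta_\mathcal{F}(I))$, $f(\delta_\mathcal{N}(I))$, $f(\delta_\mathcal{F}(\widehat{I}))$, $f(\delta_\mathcal{N}(\widehat{I}))$ as vectors indexed by $i = -1, 0, \ldots, n-1$, padding with zeros in degrees exceeding the dimension of the corresponding complex. By definition, $I$ is an $f$-ideal if and only if
\[
f_i(\delta_\mathcal{F}(I)) = f_i(\delta_\mathcal{N}(I)) \quad \text{for all } i = -1, 0, \ldots, n-1,
\]
and $\widehat{I}$ is an $f$-ideal if and only if
\[
f_i(\delta_\mathcal{F}(\widehat{I})) = f_i(\delta_\mathcal{N}(\widehat{I})) \quad \text{for all } i = -1, 0, \ldots, n-1.
\]
By Corollary \ref{maincor}$(i)$, for each $i$ we have $f_i(\delta_\mathcal{N}(\widehat{I})) = \binom{n}{i+1} - f_{n-i-2}(\delta_\mathcal{F}(I))$, and by Corollary \ref{maincor}$(ii)$, $f_i(\delta_\mathcal{F}(\widehat{I})) = \binom{n}{i+1} - f_{n-i-2}(\delta_\mathcal{N}(I))$. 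Therefore, for each $i \in \{-1, 0, \ldots, n-1\}$,
\[
f_i(\delta_\mathcal{F}(\widehat{I})) = f_i(\delta_\mathcal{N}(\widehat{I}))
\iff f_{n-i-2}(\delta_\mathcal{N}(I)) = f_{n-i-2}(\delta_\mathcal{F}(I)).
\]
As $i$ ranges over $\{-1, 0, \ldots, n-1\}$, the index $n-i-2$ ranges over the same set, so the left-hand conditions hold for all $i$ if and only if the right-hand conditions hold for all $i$. That is, $\widehat{I}$ is an $f$-ideal if and only if $I$ is an $f$-ideal.
\end{proof}
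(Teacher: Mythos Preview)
Your proof is correct and follows essentially the same approach as the paper: both derive the result directly from Corollary \ref{maincor}. The paper's version is terser---it argues one direction from the corollary and then handles the converse by replacing $I$ with $\widehat{I}$ (implicitly using $\skew{5.2}\widehat{\widehat{I}} = I$)---whereas you unfold the transformation explicitly and observe it is invertible coordinatewise, giving both directions at once; but the underlying idea is identical.
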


\begin{proof}
Suppose $f(\delta_\mathcal{N}(I))= f(\delta_\mathcal{F}(I)) = 
(f_{-1},f_0,\ldots,f_d)$.
Then by Corollary \ref{maincor}, both 
$\delta_\mathcal{N}(\widehat{I})$ and $\delta_\mathcal{F}(\widehat{I})$ will 
have the same $f$-vector.  For the reverse direction, simply
replace $I$ with $\widehat{I}$ and use the same corollary.
\end{proof}

\begin{remark}
Theorem \ref{maintheorem2} was first proved by the first author 
(see \cite{B}) using the  characterization of $f$-ideals of \cite{guo}.  
The proof presented here avoids the machinery of \cite{guo}.
\end{remark}

\begin{example}
In Example \ref{mixed2} we computed the ideal $\widehat{I}$ of 
the ideal $I$ in Example \ref{mixed}.  By
Theorem \ref{maintheorem2} the ideal $\widehat{I}$ is an $f$-ideal.  
Indeed, the simplicial complexes
 $\delta_{\mathcal{N}}(\widehat{I})$
and $\delta_{\mathcal{N}}(\widehat{I})$ are given in Figure \ref{figure2},
and both simplicial complexes have $f$-vector $(1,5,8,2)$.   
Note that $f$-vector of
$\delta_\mathcal{N}(I)$ and $\delta_\mathcal{F}(I)$ was $(1,5,8,2)$, 
so by Corollary \ref{maincor}
\begin{eqnarray*}
f(\delta_{\mathcal{F}}(\widehat{I})) = f(\delta_\mathcal{N}(\widehat{I}))& = &
\left(\binom{5}{0}-0,\binom{5}{1}-0,\binom{5}{2}-2,\binom{5}{3}-8,
\binom{5}{4}-5,\binom{5}{5}-1\right)\\
& = & (1,5,8,2).
\end{eqnarray*}

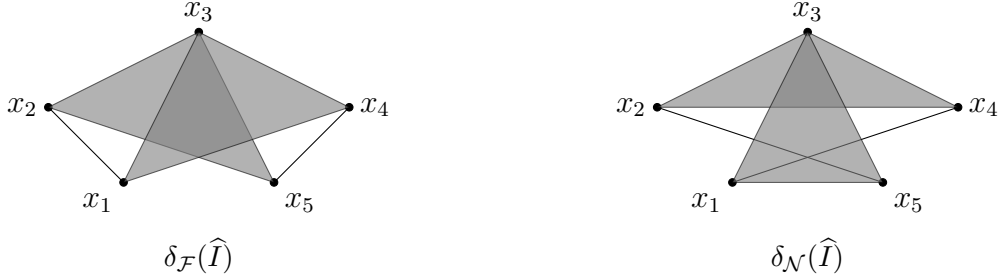
\begin{figure}[H]
    \centering
    \begin{subfigure}[h]{0.50\textwidth}
        \centering
        \begin{tikzpicture}
\draw[fill] (0,0) circle [radius=0.05];
\node [left] at (0,0) {$x_2$};
\draw[fill] (1,-1) circle [radius=0.05];
\node [below left] at (1,-1) {$x_1$};
\draw[fill] (2,1) circle [radius=0.05];
\node [above] at (2,1) {$x_3$};
\draw[fill] (3,-1) circle [radius=0.05];
\node [below right] at (3,-1) {$x_5$};
\draw[fill] (4,0) circle [radius=0.05];
\node [right] at (4,0) {$x_4$};
\path (3,-1) edge (4,0);
\path (1,-1) edge (0,0);
\draw[fill=gray, opacity=0.6] (0,0) -- (3,-1) -- (2,1) -- (0,0);
\draw[fill=gray, opacity=0.6] (1,-1) -- (2,1) -- (4,0) -- (1,-1);

\end{tikzpicture}

        \caption*{$\delta_\mathcal{F} (\widehat{I})$}
        
    \end{subfigure}
    ~ 
    \begin{subfigure}[h]{0.5\textwidth}
        \centering
        \begin{tikzpicture}
\draw[fill] (0,0) circle [radius=0.05];
\node [left] at (0,0) {$x_2$};
\draw[fill] (1,-1) circle [radius=0.05];
\node [below left] at (1,-1) {$x_1$};
\draw[fill] (2,1) circle [radius=0.05];
\node [above] at (2,1) {$x_3$};
\draw[fill] (3,-1) circle [radius=0.05];
\node [below right] at (3,-1) {$x_5$};
\draw[fill] (4,0) circle [radius=0.05];
\node [right] at (4,0) {$x_4$};
\path (3,-1) edge (0,0);
\path (1,-1) edge (4,0);
\draw[fill=gray, opacity=0.6] (0,0) -- (4,0) -- (2,1) -- (0,0);
\draw[fill=gray, opacity=0.6] (1,-1) -- (2,1) -- (3,-1) -- (1,-1);

\end{tikzpicture}

        \caption*{$\delta_\mathcal{N}(\widehat{I})$}
         \end{subfigure}
    \caption{Facet and non-face complexes of $\widehat{I}=\langle x_1x_2, x_4x_5, x_1x_3x_4, x_2x_3x_5 \rangle$.}\label{figure2}
\end{figure}
\end{example}

Theorem \ref{maintheorem2} implies that $f$-ideals
come in ``pairs".   This observation allows us to extend
many known results about $f$-ideals to their complementary duals.  
For example, we can now classify the $f$-ideals that
are equigenerated in degree $n-2$.

\begin{theorem}   Let $I$ 
be a square-free monomial of $k[x_1,\ldots,x_n]$ equigenerated
in degree $n-2$.   Then the following are equivalent:
\begin{enumerate}
\item[$(i)$]  $I$ is an $f$-ideal.
\item[$(ii)$] $\widehat{I}$ is an $f$-ideal.
\item[$(iii)$] $\widehat{I}$ is an unmixed ideal
of height $n-2$ (i.e., all of the associated primes of
$I$ have height $n-2$)  with $p = \frac{1}{2}\binom{n}{2}$.
\end{enumerate}
\end{theorem}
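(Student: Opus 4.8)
The plan is to transfer the problem to the degree-two case. First note that if $I$ is equigenerated in degree $n-2$, then each generator $x_1\cdots x_n/m$ of $\widehat I$ has degree $2$, and since the assignment $m\mapsto x_1\cdots x_n/m$ reverses divisibility it sends the minimal generating set $\mathcal G(I)$ bijectively onto $\mathcal G(\widehat I)$; hence $\widehat I$ is equigenerated in degree $2$ with $|\mathcal G(\widehat I)|=p$. Granting this, $(i)\Leftrightarrow(ii)$ is just Theorem~\ref{maintheorem2}, and the whole content is the equivalence $(ii)\Leftrightarrow(iii)$ for the degree-two ideal $J:=\widehat I$.

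To decide when the equigenerated degree-two ideal $J$ is an $f$-ideal I would apply Lemma~\ref{partitionchar}. Because $J$ is equigenerated in degree $2$ we have $C_d(J)=\emptyset$ for $d\ne 2$ and $|C_2(J)|=p$, and a short computation of the sets $A_d(J)$ shows $A_0(J)=\emptyset$, $|A_1(J)|=\#\{x_i : x_i\text{ divides no generator of }J\}$, $|A_2(J)|=\binom n2-p$, and, for $d\ge 3$, $A_d(J)=\{m\in M_d : m\notin J\}$ (no square-free monomial of degree $\ge 3$ can divide a degree-$2$ generator). Thus $J$ is an $f$-ideal if and only if: \textup{(a)} every variable divides some generator of $J$; \textup{(b)} $p=\tfrac12\binom n2$; and \textup{(c)} every square-free monomial of degree $\ge 3$ lies in $J$. (Alternatively one may quote the classification of degree-two $f$-ideals in \cite{abbasi} directly.)

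It then remains to identify \textup{(a)}--\textup{(c)} with the conditions in $(iii)$. Since $J$ is square-free it is radical, so its associated primes are its minimal primes, which are exactly the ideals $(x_i : x_i\notin F)$ for $F$ a facet of $\delta_{\mathcal N}(J)$; hence ``$J$ unmixed of height $n-2$'' means precisely that $\delta_{\mathcal N}(J)$ is a pure $1$-dimensional complex, i.e.\ a graph with no isolated vertex and at least one edge. The ``pure $1$-dimensional'' part is exactly \textup{(c)} (together with $J\neq 0$), and ``no isolated vertex'' says every variable appears in some degree-$2$ non-face of $J$. Writing $H$ for the graph whose edges are the generators of $J$ and $H^{c}$ for its complement, the degree-$2$ non-faces of $J$ are the edges of $H^{c}$, so $(iii)$ asserts that $H^{c}$ is triangle-free, $H^{c}$ has no isolated vertex, and $|E(H)|=|E(H^{c})|=\tfrac12\binom n2$; meanwhile $J$ being an $f$-ideal asserts the same except that it replaces ``$H^{c}$ has no isolated vertex'' by ``$H$ has no isolated vertex''. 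The key claim is that, once \textup{(b)} and \textup{(c)} hold, both of these hold automatically: a triangle-free graph on $n$ vertices with $\tfrac12\binom n2$ edges cannot have an isolated vertex, for otherwise it would be a triangle-free graph on $n-1$ vertices with more than $\lfloor (n-1)^2/4\rfloor$ edges, contradicting Mantel's theorem; and its complement cannot have an isolated vertex either, since a triangle-free graph with a universal vertex is a star and so has only $n-1$ edges, whereas here it has $\tfrac12\binom n2$. Combining the three steps gives the theorem.

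The only step that is not pure bookkeeping is the last one: showing that the generator-count condition \textup{(b)} together with the ``height at least $n-2$'' condition \textup{(c)} already force the two ``no isolated vertex'' conditions, so that $(iii)$ and ``$J$ is an $f$-ideal'' really do coincide. This is where the Mantel-type extremal input enters, and it is also where small values of $n$ deserve a separate look, since the inequality $\tfrac12\binom n2>n-1$ invoked above requires $n\ge 5$; the remaining cases with $n$ small should be examined directly, paying attention to ideals whose facet complex fails to involve all $n$ variables.
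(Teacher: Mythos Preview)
Your argument is correct, but it goes well beyond what the paper does. The paper's own proof of $(ii)\Leftrightarrow(iii)$ is a one-line citation: since $\widehat I$ is equigenerated in degree two, the equivalence is exactly \cite[Theorem~3.5]{abbasi}. You mention this option parenthetically, and that is in fact the entire content of the paper's proof.

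What you add is an independent derivation of the degree-two classification from Lemma~\ref{partitionchar}, together with the observation (via Mantel's theorem) that under conditions \textup{(b)} and \textup{(c)} the two a priori different ``no isolated vertex'' conditions---one on $H$ coming from the $f$-ideal side, one on $H^{c}$ coming from the unmixed side---are both automatic for $n\ge 5$. This is genuinely more informative than the paper's citation: it makes the argument self-contained using only the machinery already developed in the paper, and it surfaces the edge-case issue at $n=4$ (where, e.g., $\widehat I=\langle x_1x_2,x_1x_3,x_2x_3\rangle\subset k[x_1,\ldots,x_4]$ is unmixed of height $2$ with $p=3=\tfrac12\binom{4}{2}$ but is \emph{not} an $f$-ideal because $x_4$ appears in no generator). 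The paper's proof, by contrast, simply inherits whatever standing hypotheses \cite{abbasi} imposes on the ambient ring and does not discuss this point. Your route costs more work but buys transparency and flags a boundary case the short proof obscures.
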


\begin{proof}
The equivalence of $(i)$ and $(ii)$ is 
Theorem \ref{maintheorem2}.   Because
the ideal $\widehat{I}$ is equigenerated in
degree two,  the equivalence of $(ii)$ and $(iii)$ is 
\cite[Theorem 3.5]{abbasi}.
\end{proof}

Following Guo, Wu, and Liu \cite{guo}, let $V(n,d)$
denote the set of $f$-ideals of $k[x_1,\ldots,x_n]$
that are equigenerated in degree $d$.   We 
can now extend Guo, et al.'s results.

\begin{theorem}\label{vnd} Using the notation above, we have  
\begin{enumerate}
\item[$(i)$] For all $1 \leq d \leq n-1$, 
$|V(n,d)| = |V(n,n-d)|$.
\item[$(ii)$] If $n \neq 2$, then
$V(n,1) = V(n,n-1) = \emptyset$.  If $n = 2$, then
$|V(2,1)| = 2$.
\item[$(iii)$] $V(n,n-2) \neq \emptyset$ if and only
if $n \equiv 0$ or 1$ \pmod{4}$.
\end{enumerate}
\end{theorem}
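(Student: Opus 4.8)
\textbf{Proof proposal for Theorem \ref{vnd}.}
The plan is to leverage the duality of Theorem \ref{maintheorem2} together with the numerical constraints coming from Lemma \ref{partitionchar} and Corollary \ref{maincor}, and then to invoke the existing classification results for equigenerated $f$-ideals from \cite{guo,abbasi}. For part $(i)$, I would observe that the complementary dual operation $I \mapsto \widehat{I}$ sends a square-free monomial ideal equigenerated in degree $d$ to one equigenerated in degree $n-d$ (since each generator $m$ of degree $d$ is replaced by $x_1\cdots x_n/m$ of degree $n-d$), and that by the remark $\skew{5.2}\widehat{\widehat{I}} = I$ this is an involution. By Theorem \ref{maintheorem2}, $I$ is an $f$-ideal if and only if $\widehat{I}$ is; hence $I \mapsto \widehat{I}$ restricts to a bijection $V(n,d) \to V(n,n-d)$, giving $|V(n,d)| = |V(n,n-d)|$.

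For part $(ii)$, by part $(i)$ it suffices to treat $V(n,1)$. An ideal equigenerated in degree $1$ is of the form $I = \langle x_{i_1},\ldots,x_{i_r}\rangle$. Here I would use Lemma \ref{partitionchar}: $I$ is an $f$-ideal iff $|A_d(I)| = |C_d(I)|$ for all $d$. For $d=1$, $C_1(I) = \mathcal{G}(I)$ has $r$ elements, while $A_1(I)$ consists of the variables not in $I$ and not dividing any generator, i.e. $n-r$ variables; so we need $r = n-r$. For $d=2$, $C_2(I) = \emptyset$ (no degree-two generators), so we need $A_2(I) = \emptyset$; but any product $x_ax_b$ of two variables not in $I$ lies in $A_2(I)$, and there are $\binom{n-r}{2}$ such monomials, forcing $\binom{n-r}{2}=0$, i.e. $n-r \le 1$. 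Combining $r = n-r$ with $n-r\le 1$ gives $n \le 2$; and one checks directly that for $n=2$ the ideal $\langle x_1\rangle$ (resp. $\langle x_2\rangle$) works, giving $|V(2,1)| = 2$, while for $n\neq 2$ no such ideal exists, so $V(n,1) = V(n,n-1) = \emptyset$. (One must also note $n=1$ gives $V(1,1)=\emptyset$ since $r=n-r$ is impossible, consistent with $n\neq 2$.)

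For part $(iii)$, by parts $(i)$ and Theorem \ref{maintheorem2} we have $V(n,n-2)\neq\emptyset$ iff $V(n,2)\neq\emptyset$. The classification of equigenerated degree-two $f$-ideals in \cite{abbasi} (as quoted in the theorem just above, via \cite[Theorem 3.5]{abbasi}) says that degree-two $f$-ideals on $n$ variables exist iff there is an unmixed height-$2$ square-free monomial ideal with exactly $p = \frac{1}{2}\binom{n}{2}$ generators; in particular this requires $\binom{n}{2}$ to be even, which happens precisely when $n\equiv 0$ or $1 \pmod 4$. I would check both directions: if $\binom{n}{2}$ is odd then $\frac12\binom{n}{2}$ is not an integer so no such ideal exists; if $n\equiv 0,1\pmod 4$ then one exhibits (or cites from \cite{abbasi,anwar}) an explicit equigenerated degree-two $f$-ideal. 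The main obstacle is making the "only if" direction of $(iii)$ fully rigorous without simply quoting \cite{abbasi}: one needs to know that the constraint $|A_2(I)| = |C_2(I)|$ together with the degree-one and degree-three constraints from Lemma \ref{partitionchar} forces $|\mathcal{G}(I)| = \frac12\binom{n}{2}$, an integrality condition that fails exactly when $n\equiv 2,3\pmod 4$. This is where the bookkeeping with the $M_d = A_d\sqcup B_d\sqcup C_d\sqcup D_d$ partition does the real work, and I would spell out the degree-$2$ count: $|M_2| = \binom{n}{2}$, $B_2(I) = \emptyset$ since generators have degree $n-2\ge 2$ (and for $n\ge 4$ no degree-two monomial divides a degree-$(n-2)$ generator unless $n=4$, a case to handle separately), $A_2 \sqcup C_2 \sqcup D_2 = M_2$, and $|A_2| = |C_2|$, leaving the parity of $\binom{n}{2}$ as the obstruction.
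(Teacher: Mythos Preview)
Parts $(i)$ and $(ii)$ are correct. Part $(i)$ is exactly the paper's argument. For $(ii)$ the paper proceeds differently: it reads off $r=n-r$ from $f_0(\delta_\mathcal{F}(I))=f_0(\delta_\mathcal{N}(I))$, and then for even $n\ge 4$ observes that $\dim\delta_\mathcal{F}(I)=0$ while $\dim\delta_\mathcal{N}(I)=n/2-1\ge 1$, a contradiction. Your route through Lemma~\ref{partitionchar} at $d=1,2$ is a tidy alternative that bypasses the dimension comparison and reaches the same conclusion.

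For $(iii)$, the reduction to $V(n,2)$ matches the paper, which then simply invokes \cite[Proposition~3.4]{guo} for the equivalence $V(n,2)\neq\emptyset \Leftrightarrow n\equiv 0,1\pmod 4$. Your citation of \cite[Theorem~3.5]{abbasi} does give the ``only if'' direction (any $I\in V(n,2)$ must have $\tfrac12\binom{n}{2}$ generators, forcing $\binom{n}{2}$ even), but for the ``if'' direction you still need an explicit construction or the reference to \cite{guo}; the characterization alone does not produce an example. The genuine slip is in your final paragraph, where you drift from $V(n,2)$ back to $V(n,n-2)$: you write ``$B_2(I)=\emptyset$ since generators have degree $n-2$'' and ``no degree-two monomial divides a degree-$(n-2)$ generator unless $n=4$'', both of which are false (e.g.\ $x_1x_2$ divides $x_1x_2\cdots x_{n-2}$ for every $n\ge 4$). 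The intended count works only for $I\in V(n,2)$: there $B_2(I)=D_2(I)=\emptyset$ because every generator has degree exactly $2$, whence $\binom{n}{2}=|A_2(I)|+|C_2(I)|=2|C_2(I)|$. With that correction your sketch of ``only if'' is sound, but ``if'' still requires input from \cite{guo}.
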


\begin{proof}
$(i)$ By Theorem \ref{maintheorem2}, 
the complementary dual gives a bijection
between the sets $V(n,d)$ and $V(n,n-d)$.  

$(ii)$ Suppose that $I \in V(n,1)$, i.e., $I$
is an $f$-ideal generated by a subset of the variables.
So, the facets of $\delta_\mathcal{F}(I)$ are vertices, while
$\delta_\mathcal{N}(I)$ is a simplex.  Then $f_0(\delta_\mathcal{F}(I))$, the 
number of variables that generate $I$, must be
the same as $f_0(\delta_\mathcal{N}(I))$,
the number of variables not in $I$.
This implies that $n$ cannot be odd.   
Furthermore, if $n \geq 4$ is even,
then $\dim \delta_\mathcal{F}(I) = 0$, but
$\dim \delta_\mathcal{N}(I) = \frac{n}{2}-1 \geq 1$,
contradicting the fact that $I$ is an $f$-ideal.

When $n=2$, then $I_1=\langle x_1 \rangle$ and 
$I_2 = \langle x_2 \rangle$ are $f$-ideals of
$k[x_1,x_2]$.

$(iii)$ By $(i)$, $|V(n,n-2)| \neq 0$ if and only
if $|V(n,2)| \neq 0$.  Now \cite[Proposition 3.4]{guo}
shows that $V(n,2) \neq \emptyset$ if and only
if $n \equiv 0,1 \pmod{4}$.
\end{proof}

\begin{remark}  \cite[Proposition 4.10]{guo} gives
an explicit formula for $|V(n,2)|$, which we will not present
here.  So by Theorem \ref{vnd} $(i)$ there is an 
explicit formula for $|V(n,n-2)|$.
\end{remark}

We now explore some necessary conditions on the $f$-vector
of $\delta_\mathcal{N}(I)$ (equivalently, $\delta_\mathcal{F}(I)$) when
$I$ is an $f$-ideal.   We also give a necessary condition on 
the generators of an $f$-ideal.    As we shall see, 
Theorem \ref{maintheorem2} plays a role in some of our proofs.

We first recall some notation.  
If $I \subseteq R$ is a square-free monomial ideal, then we let
\[\alpha(I) = \min\{\deg(m) ~|~ m \in \mathcal{G}(I)\} 
~~\mbox{and}~~ 
\omega(I) = \max\{\deg(m)~|~ m \in \mathcal{G}(I)\}.
\]
We present some conditions on the $f$-vector;
some of these results were known.

\begin{theorem}
Suppose that $I$ is an $f$-ideal in
$R = k[x_1,\ldots,x_n]$ with associated $f$-vector
$f = f(\delta_{\mathcal{F}}(I)) = f(\delta_{\mathcal{N}}(I))$.
Let $\alpha = \alpha(I)$ and $\omega = \omega(I)$.
Then
\begin{enumerate}
\item[$(i)$] $f_i = \binom{n}{i+1}$ for $i=0,\ldots,\alpha-2$.
\item[$(ii)$] $f_{\alpha-1} \geq \frac{1}{2}\binom{n}{\alpha}$.
\item[$(iii)$] $f_{\omega-1} \leq \frac{1}{2}\binom{n}{\omega}$.
\item[$(iv)$] if $\alpha = \omega$ (i.e., $I$ is 
equigenerated), then $f_{\alpha-1} = \frac{1}{2}\binom{n}{\alpha}$.
\item[$(v)$] $\dim \delta_{\mathcal{F}}(I) =
\dim \delta_{\mathcal{N}}(I) = \omega-1 \leq n-2$.
\end{enumerate}
\end{theorem}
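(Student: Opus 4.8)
The plan is to run all five parts through the partition $M_d = A_d(I) \sqcup B_d(I) \sqcup C_d(I) \sqcup D_d(I)$ of \eqref{partition}, together with the two counting identities extracted in the proof of Lemma~\ref{partitionchar}, namely $f_{d-1}(\delta_\mathcal{N}(I)) = |A_d(I)| + |B_d(I)|$ and $f_{d-1}(\delta_\mathcal{F}(I)) = |B_d(I)| + |C_d(I)|$, plus the $f$-ideal criterion $|A_d(I)| = |C_d(I)|$ from the same lemma; since $I$ is an $f$-ideal I write $f_{d-1}$ for the common entry. The structural observation I expect to exploit repeatedly is that in the two boundary degrees $d=\alpha$ and $d=\omega$ one of the four blocks is forced to be empty: a square-free monomial of degree $\alpha$ lying in $I$ must equal a minimal generator, so $D_\alpha(I) = \emptyset$; and a square-free monomial of degree $\omega$ dividing a generator (of degree $\le\omega$) must equal that generator, so $B_\omega(I) = \emptyset$.

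For $(i)$ I would note that no minimal generator has degree below $\alpha$, so for $1 \le d \le \alpha - 1$ both $C_d(I)$ and $D_d(I)$ are empty, giving $M_d = A_d(I) \sqcup B_d(I)$ and hence $f_{d-1} = |M_d| = \binom{n}{d}$; reindexing by $i = d-1$ gives $(i)$. (The stated range is vacuous when $\alpha = 1$, and when $\alpha \ge 2$ Remark~\ref{variablecase} ensures both complexes sit on all $n$ vertices, so no ambient-ring subtlety arises.) For $(ii)$ I would work in degree $\alpha$: adding the two counting identities and using $D_\alpha(I) = \emptyset$ gives $2 f_{\alpha-1} = |A_\alpha(I)| + |C_\alpha(I)| + 2|B_\alpha(I)| = \binom{n}{\alpha} + |B_\alpha(I)| \ge \binom{n}{\alpha}$.

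Part $(iii)$ I would prove symmetrically in degree $\omega$: since $B_\omega(I) = \emptyset$, the same manipulation gives $2 f_{\omega - 1} = |A_\omega(I)| + |C_\omega(I)| = \binom{n}{\omega} - |D_\omega(I)| \le \binom{n}{\omega}$. (Alternatively one can deduce $(iii)$ from $(ii)$ via duality: $\widehat{I}$ is an $f$-ideal by Theorem~\ref{maintheorem2} with $\alpha(\widehat{I}) = n - \omega$, so $(ii)$ applied to $\widehat{I}$ bounds $f_{n-\omega-1}(\delta_\mathcal{N}(\widehat{I}))$ below by $\tfrac{1}{2}\binom{n}{\omega}$, while Corollary~\ref{maincor}$(i)$ identifies that same number with $\binom{n}{\omega} - f_{\omega-1}$; but then one must handle the degenerate case $\omega = n$, which the direct argument sidesteps.) Then $(iv)$ is immediate: when $\alpha = \omega$, parts $(ii)$ and $(iii)$ squeeze $f_{\alpha-1} = f_{\omega-1}$ between $\tfrac{1}{2}\binom{n}{\alpha}$ and $\tfrac{1}{2}\binom{n}{\omega} = \tfrac{1}{2}\binom{n}{\alpha}$.

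Finally, for $(v)$: the facets of $\delta_\mathcal{F}(I)$ are exactly the supports of the minimal generators of $I$, so the largest facet has $\omega$ vertices and $\dim \delta_\mathcal{F}(I) = \omega - 1$; since $I$ is an $f$-ideal the two $f$-vectors have the same length, so $\dim \delta_\mathcal{N}(I) = \dim \delta_\mathcal{F}(I) = \omega - 1$; and $\omega \le n - 1$ because $\omega = n$ would force $x_1\cdots x_n \in \mathcal{G}(I) \subseteq I$, hence $\{x_1,\dots,x_n\} \notin \delta_\mathcal{N}(I)$ and $\dim \delta_\mathcal{N}(I) \le n - 2 < n - 1 = \dim \delta_\mathcal{F}(I)$, contradicting the equality of $f$-vectors. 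I expect no genuine obstacle here; the only points needing care are confirming $D_\alpha(I) = \emptyset$ and $B_\omega(I) = \emptyset$ in the relevant degrees, the $\alpha = 1$ ambient-ring caveat in $(i)$, and, if one chooses the duality route in $(iii)$, the index shift in Corollary~\ref{maincor} together with the degenerate top-degree case.
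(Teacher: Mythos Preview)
Your proof is correct. The one genuine departure from the paper is in part $(iii)$: the paper derives it from $(ii)$ via the complementary dual, applying Theorem~\ref{maintheorem2} to $\widehat{I}$ (which has $\alpha(\widehat{I}) = n-\omega$) and then translating back with Corollary~\ref{maincor}, whereas you give the direct partition argument symmetric to $(ii)$, using $B_\omega(I)=\emptyset$ in place of $D_\alpha(I)=\emptyset$. Your route is more elementary and self-contained---in particular it sidesteps the $\omega = n$ degeneracy you flag---while the paper's route has the expository virtue of exhibiting the duality theorem as a working tool. The remaining parts essentially agree: for $(i)$ the paper simply cites \cite[Lemma~3.7]{anwar} while you supply the short partition argument; your $(ii)$ is the paper's argument recast as a direct computation rather than a contradiction; and $(iv)$, $(v)$ match.
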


\begin{proof}
$(i)$  See \cite[Lemma 3.7]{anwar}.

$(ii)$ If $I$ is generated by monomials of degree 
$\alpha$ or larger, then \eqref{partition} 
becomes
\[M_\alpha = A_\alpha(I) \sqcup B_\alpha(I) \sqcup C_\alpha(I)
~~\mbox{since $D_\alpha(I) = \emptyset$}.\]  Suppose
$f_{\alpha-1} < \frac{1}{2}\binom{n}{\alpha}$.
Because $f_{\alpha-1}(\delta_\mathcal{N}(I)) = |A_\alpha(I)|+|B_\alpha(I)|$,
we have $|C_\alpha(I)| > \frac{1}{2}\binom{n}{\alpha}$.
But since $I$ is an
$f$-ideal, by  Lemma \ref{partitionchar} we have
\[\frac{1}{2}\binom{n}{\alpha}>  f_{\alpha-1}(\delta_\mathcal{N}(I)) \geq |A_\alpha(I)| = |C_\alpha(I)| > \frac{1}{2}\binom{n}{\alpha}.\]
We now have the desired contradiction.

$(iii)$ Suppose that $f_{\omega-1}(\delta_{\mathcal{F}}(I)) > \frac{1}{2}\binom{n}{\omega} =\frac{1}{2}\binom{n}{n-\omega}$.   Since $\omega = \omega(I)$,
we must have that $\alpha(\widehat{I}) = n - \omega$.  Since $\widehat{I}$ is also an $f$-ideal
by Theorem \ref{maintheorem2}, $(ii)$ implies that $f_{n-\omega-1}(\delta_{\mathcal{F}}(\widehat{I})) \geq
\frac{1}{2}\binom{n}{n-\omega}$.  But by Corollary \ref{maincor},
and since $\widehat{I}$ is an $f$-ideal,   
\[f_{n-\omega-1}(\delta_{\mathcal{F}}(\widehat{I}))=
f_{n-\omega-1}(\delta_\mathcal{N}(\widehat{I}))
= \binom{n}{n-\omega} - f_{\omega-1}(\delta_{\mathcal{F}}(I)) < \frac{1}{2}\binom{n}{n-\omega}.\]
This gives the desired contradiction.

$(iv)$  We simply combine the inequalities of $(ii)$ and $(iii)$.

$(v)$
Since $\omega = \omega(I)$, there is a generator $m$ of $I$ of degree 
$\omega$, and furthermore,
every other generator has smaller degree.  So, the 
facet of $\delta_\mathcal{F}(I)$ of largest dimension has dimension
$\omega-1$.   Since $I$ is an $f$-ideal, this also forces 
$\delta_\mathcal{N}(I)$ to have a facet of
dimension of $\omega-1$.  Note that $\omega(I) \leq n-1$ since 
no $f$-ideal has $x_1\cdots x_n$ as a generator.
\end{proof}

Our final result shows that if $I$ is not an equigenerated $f$-ideal, 
then in some cases we can deduced the existence of generators of
other degrees.

\begin{theorem}\label{generators}  
Suppose that $I$ is an $f$-ideal of $k[x_1,\ldots,x_n]$ 
with $\alpha = \alpha(I) < \omega(I) = \omega$,
and let $f = f(\delta_{\mathcal{F}}(I)) = f(\delta_{\mathcal{N}}(I))$.
\begin{enumerate}
\item[$(i)$]  If $f_{\alpha-1} > \binom{n}{\alpha} - n+\alpha$, then $I$ also 
has a generator of degree $\alpha+1$.
\item[$(ii)$] If $f_{\omega-1} < \omega$, then $I$ also has a generator of 
degree $\omega-1$.
\end{enumerate}
\end{theorem}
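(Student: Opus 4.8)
The plan is to exploit the partition $M_d = A_d(I) \sqcup B_d(I) \sqcup C_d(I) \sqcup D_d(I)$ from \eqref{partition} together with the characterization of $f$-ideals in Lemma \ref{partitionchar}, namely $|A_d(I)| = |C_d(I)|$ for all $0 \le d \le n$. The key observation is that if $I$ has \emph{no} generator of degree $d$, then $C_d(I) = \emptyset$, hence $A_d(I) = \emptyset$; this forces every square-free monomial of degree $d$ to lie in $B_d(I) \sqcup D_d(I)$, which is a very restrictive condition that I expect to contradict the hypothesized bound on the relevant entry of the $f$-vector. So in both parts I would argue by contradiction: assume the claimed generator does not exist, deduce $A_{\alpha+1}(I) = \emptyset$ (resp.\ $C_{\omega-1}(I) = \emptyset$), and then count.

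For part $(i)$: suppose $I$ has no generator of degree $\alpha+1$, so $C_{\alpha+1}(I) = \emptyset$ and therefore $A_{\alpha+1}(I) = \emptyset$. Then $M_{\alpha+1} = B_{\alpha+1}(I) \sqcup D_{\alpha+1}(I)$, so every square-free monomial of degree $\alpha+1$ either divides a generator (necessarily one of degree $\alpha+1$ or more — but there are none of degree $\alpha+1$, so of degree $\ge \alpha+2$) or lies in $I_{\alpha+1}$. I would like to bound $|B_{\alpha+1}(I)|$ from above: a monomial in $B_{\alpha+1}(I)$ divides a generator of degree $\ge \alpha+2$, hence is \emph{not} a multiple of any degree-$\alpha$ generator. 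Now $f_{\alpha-1} = |A_\alpha(I)| + |B_\alpha(I)| = |A_\alpha(I)| + \big(\binom{n}{\alpha} - |A_\alpha(I)| - |C_\alpha(I)|\big) = \binom{n}{\alpha} - |C_\alpha(I)|$ (using $D_\alpha(I) = \emptyset$ since $\alpha = \alpha(I)$), so the hypothesis $f_{\alpha-1} > \binom{n}{\alpha} - n + \alpha$ says $|C_\alpha(I)| < n - \alpha$, i.e.\ $I$ has fewer than $n-\alpha$ generators of degree $\alpha$. A monomial of $M_{\alpha+1}$ that is \emph{not} in $I_{\alpha+1}$ must avoid being a multiple of every degree-$\alpha$ generator. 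The crux of the counting is: with $|C_\alpha(I)| = c < n-\alpha$ generators, there are at least (something like) $\binom{n}{\alpha+1} - c(n-\alpha)$ square-free monomials of degree $\alpha+1$ not in $I_{\alpha+1}$, since each degree-$\alpha$ generator is divided into at most $n-\alpha$ monomials of degree $\alpha+1$; and $\binom{n}{\alpha+1} - c(n-\alpha) > 0$ when $c < n-\alpha$ provided $\binom{n}{\alpha+1} \ge (n-\alpha)^2$ or, more carefully, one needs $\binom{n}{\alpha+1} > c(n-\alpha)$, which holds because $\binom{n}{\alpha+1} = \binom{n}{\alpha}\cdot\frac{n-\alpha}{\alpha+1} \ge \binom{n}{\alpha}\cdot\frac{n-\alpha}{n} $ and $\binom{n}{\alpha} > c$ trivially — here I would need to be slightly more delicate, perhaps showing directly that $A_{\alpha+1}(I) \ne \emptyset$ by producing an explicit square-free monomial of degree $\alpha+1$ that neither lies in $I$ nor divides a generator. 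That contradicts $A_{\alpha+1}(I) = \emptyset$.

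For part $(ii)$: I would apply Theorem \ref{maintheorem2} and dualize. If $I$ is an $f$-ideal with $\omega = \omega(I)$, then $\widehat{I}$ is an $f$-ideal with $\alpha(\widehat{I}) = n - \omega$, and by Corollary \ref{maincor} we have $f_{n-\omega-1}(\delta_{\mathcal{F}}(\widehat{I})) = \binom{n}{n-\omega} - f_{\omega-1}(\delta_{\mathcal{F}}(I))$. Writing $\alpha' = n-\omega = \alpha(\widehat{I})$ and $\omega' = \omega(\widehat{I}) = n - \alpha(I)$, I want to translate the hypothesis $f_{\omega-1} < \omega$ into the hypothesis of part $(i)$ applied to $\widehat{I}$: we get $f_{\alpha'-1}(\delta_{\mathcal{F}}(\widehat{I})) = \binom{n}{\alpha'} - f_{\omega-1} > \binom{n}{\alpha'} - \omega = \binom{n}{n-\alpha'} - (n - \alpha') = \binom{n}{\alpha'} - n + \alpha'$, which is exactly the hypothesis of $(i)$ for $\widehat{I}$. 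Hence by $(i)$, $\widehat{I}$ has a generator of degree $\alpha' + 1 = n - \omega + 1$, and dualizing back (using $\widehat{\widehat{I}} = I$ and that the dual of a degree-$d$ generator is a degree-$(n-d)$ generator) shows $I$ has a generator of degree $n - (n-\omega+1) = \omega - 1$, as claimed.

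The main obstacle I anticipate is the counting estimate in part $(i)$: I need to verify carefully that having fewer than $n-\alpha$ generators of degree $\alpha$ (and no generators of degree $\alpha+1$) genuinely forces the existence of a square-free monomial of degree $\alpha+1$ that is simultaneously (a) not in $I$ and (b) not a divisor of any generator of degree $\ge \alpha+2$. Step (a) is the inclusion-exclusion / union bound sketched above; step (b) is more subtle because I must also rule out that \emph{all} such monomials happen to divide the higher-degree generators. The cleanest route is probably: pick a generator $g$ of degree $\alpha$, and among the $\ge n - \alpha$ variables $x_i \nmid g$, use that fewer than $n-\alpha$ of the degree-$\alpha$ generators exist to find a variable $x_i$ such that $x_i \cdot g$ is not a multiple of \emph{any} degree-$\alpha$ generator other than possibly $g$ itself — but $x_i \cdot g$ is not a multiple of $g$-alone issues; rather, since $x_i\cdot g$ has degree $\alpha+1$ and its only degree-$\alpha$ divisors are $g$ and $\frac{x_i g}{x_j}$ for $x_j \mid g$, there are $\alpha+1$ such divisors, and I need one degree-$(\alpha+1)$ square-free monomial all of whose $\alpha+1$ degree-$\alpha$ square-free divisors are non-generators. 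A pigeonhole / double-counting argument on pairs (degree-$\alpha$ generator, degree-$(\alpha+1)$ overlattice) should close this, and the threshold $\binom{n}{\alpha} - n + \alpha$ is presumably exactly what makes the count work; I would set up that double count explicitly as the technical heart of the proof.
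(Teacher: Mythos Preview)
Your duality reduction is correct but reversed relative to the paper: the paper proves $(ii)$ directly and then deduces $(i)$ via Theorem~\ref{maintheorem2}, whereas you attempt $(i)$ directly and deduce $(ii)$. Either order is fine, and your translation of the hypothesis through Corollary~\ref{maincor} is accurate.

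The gap is in your direct argument. Your counting sketches are inconclusive, and you correctly flag step~(b) as unresolved. The missing idea is to anchor the pigeonhole not at a \emph{generator} but at an element of $A_\alpha(I)$. Since $D_\alpha(I)=\emptyset$, the hypothesis reads $|C_\alpha(I)| = \binom{n}{\alpha} - f_{\alpha-1} < n-\alpha$, and $|A_\alpha(I)| = |C_\alpha(I)| \ge 1$. Pick $m\in A_\alpha(I)$. For each of the $n-\alpha$ variables $x_i\nmid m$, the monomial $mx_i$ cannot divide any generator (because $m$ already does not), so $mx_i\notin B_{\alpha+1}(I)\cup C_{\alpha+1}(I)$. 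Assuming for contradiction that $C_{\alpha+1}(I)=\emptyset$, Lemma~\ref{partitionchar} forces $A_{\alpha+1}(I)=\emptyset$, so every $mx_i$ lies in $D_{\alpha+1}(I)$ and is therefore divisible by some $g_i\in C_\alpha(I)$. As $|C_\alpha(I)| < n-\alpha$, two indices $i\neq j$ share the same $g$, whence $g\mid\gcd(mx_i,mx_j)=m$ and so $g=m$, contradicting $A_\alpha(I)\cap C_\alpha(I)=\emptyset$. Your attempt to start from $g\in C_\alpha(I)$ and examine its multiples leads nowhere because those multiples automatically lie in $I$; starting from $A_\alpha(I)$ is what makes the pigeonhole bite. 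The paper's own proof of $(ii)$ is the mirror image: pick $m\in A_\omega(I)$, note that its $\omega$ divisors $m/x_i$ all land in $B_{\omega-1}(I)$ and hence each divides some element of $C_\omega(I)$, and pigeonhole against $|C_\omega(I)|<\omega$ to force $m\in C_\omega(I)$.
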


\begin{proof}
To prove $(i)$, it is enough to prove $(ii)$ and apply Theorem 
\ref{maintheorem2}.  Indeed, suppose
that $f_{\alpha-1} > \binom{n}{\alpha} - n + \alpha$.  Then the ideal 
$\widehat{I}$ is an $f$-ideal with
$\omega(\widehat{I}) = n-\alpha$ and
\[f_{\omega(\widehat{I})-1}(\delta_\mathcal{F}(\widehat{I})) = 
\binom{n}{n-\alpha} - f_{\alpha-1} < n-\alpha = 
\omega(\widehat{I}).\]  So by $(ii)$
the ideal $\widehat{I}$ will have a generator of degree 
$\omega(\widehat{I})-1$, which implies that
$I$ has a generator of degree $\alpha+1$.

$(ii)$ Note that if $\alpha = \omega-1$, then the conclusion immediately 
follows.  So suppose that $\alpha < \omega-1$.
We use the partition \eqref{partition}.  Since 
$I$ is generated in degrees $\leq \omega$, we have
$B_\omega(I) = \emptyset$.  It then follows by
Lemma \ref{partitionchar} and Remark
\ref{monomialpov} that \[f_{\omega-1} = |A_\omega(I)| = 
|C_\omega(I)| < \omega.\]

Now suppose that $I$ has no generators of degree
$\omega-1$.  So $|C_{\omega-1}(I)| = 0$, and 
consequently, $|A_{\omega-1}(I)| = 0$ because
$I$ is an $f$-ideal.   Because $\alpha < \omega-1$,
we have $D_{\omega-1}(I) \neq \emptyset$.
Then, 
again by Lemma \ref{partitionchar} and Remark
\ref{monomialpov}, we must have
$f_{\omega-2} = |B_{\omega-1}(I)|.$   Let $m \in A_\omega(I)$.
After relabeling, we can assume that $m = x_1x_2\cdots x_\omega$.   
Note that $m/x_i \not\in I$ for $i=1,\ldots,
\omega$.  Indeed, if $m/x_i \in I$, this implies that $m \in I$,
contradicting the fact that all elements of $A_\omega(I)$
are not in $I$.   So $m/x_i \in B_{\omega-1}(I)$ for
all $i$.  By definition, every element of $B_{\omega-1}(I)$
must divide an element of $C_{\omega}(I)$ (since
$B_\omega(I) = \emptyset$).  Because $|C_\omega(I)| < \omega$,
there is one monomial $z \in C_\omega(I)$ such that 
$m/x_i$ and $m/x_j$ both divide $z$.  But since $\deg z =
\omega$, this forces $m = z$.  We now arrive at a 
contradiction since $m \in A_\omega(I) \cap C_\omega(I)$,
but these two sets are disjoint.
\end{proof}

\begin{remark}
The ideal $I$ of Example \ref{mixed} is an $f$-ideal 
with $\alpha = \alpha(I) = 2$, and $f_{2-1} = 8 > \binom{5}{2}-5+2 = 7$.
So by Theorem \ref{generators}, the ideal $I$ should have a generator
of degree $\alpha + 1 = 3$, which it does.  Alternatively, we could
have deduced that $I$ has a generator of degree 2 from the fact that
$\omega(I) = 3$ and $f_{3-1} = 2$.

In our computer experiments, we only found $f$-ideals which had either
$\alpha(I) = \omega(I)$, i.e., the $f$-ideals were equigenerated, or
$\alpha(I)+1 = \omega(I)$.    It would be interesting to determine
the existence of $f$-ideals with the property that $\alpha(I)+d = \omega(I)$
for any $d \in \mathbb{N}$.   Theorem \ref{generators} would imply
a necessary condition on the generators of these ideals. 
\end{remark}


\bibliographystyle{plain}

\end{document}